\newtheorem{theorem}{Theorem}
\newtheorem{lemma}{Lemma}
\theoremstyle{remark}
\newtheorem{Rem}{Remark}
\newcommand{\concept}[1]{``{#1}''}
\newcommand{\refSec}[1]{sec.\,\ref{#1}}
\newcommand{\refSupp}[1]{SI\,\refSec{#1}}
\newcommand{\refEqn}[1]{eqn. \eqref{eqn:#1}}
\newcommand{\refFig}[1]{figure\,\ref{#1}}
\newcommand{\refTbl}[1]{table\,\ref{tbl:#1}}
\newcommand{\RefFig}[1]{Figure\,\ref{#1}}
\newcommand{\eqn}[1]{eqn. (\ref{#1})}
\newcommand{\Real}{\mathbb{R}}
\newcommand{\Natural}{\mathbb{N}}
\newcommand{\Complex}{\mathbb{C}}
\newcommand{\Cont}{\mathcal{C}}
\newcommand{\Spc}[1]{\mathbf{#1}}
\newcommand{\Xs}{\Spc{X}}
\newcommand{\Sph}{\mathrm{S}}
\newcommand{\Deriv}{{\mathbf d}}
\newcommand{\defeq}{:=}
\newcommand{\T}{\mathsf{T}}
\DeclarePairedDelimiter\norm{\lVert}{\rVert}
\newcommand{\slot}{\,\cdot\,} 
\newcommand{\R}{\mathbb{R}}
\newcommand{\N}{\mathbb{N}}
\newcommand{\E}{\mathbb{E}}
\newcommand{\inp}[2]{\left\langle{#1},{#2}\right\rangle}
\newcommand{\paren}[1]{\left({#1}\right)}
\newcommand{\dform}[1]{\mathrm{\mathbf{d}}{#1}}
\newcommand{\Der}[1]{\mathrm{d}{#1}}
\newcommand{\D}{\mathsf{D}}
\newcommand{\dist}[2]{d\left({#1},{#2}\right)}
\newcommand{\cycle}{\Gamma}
\newcommand{\dT}{d\phi}
\newcommand{\thetaFrm}{d\theta}
\newcommand{\ToF}{Temporal 1-Form}
\newcommand{\Phaser}{\textit{Phaser}}
\newcommand{\FlwSym}{\Phi}
\newcommand{\Flw}[2]{\FlwSym_{#1}({#2})}
\newcommand{\AP}{\mathrm{P}}
\renewcommand{\mod}{{\mathrm{mod}~}}
\newcommand{\dTe}{\widehat{\dT}}
\newcommand{\bo}{\mathcal{O}}
\newcommand{\F}{\mathcal{F}}
\newcommand{\eJ}{\widehat{J_N}}
\renewcommand{\refSec}[1]{sec.\,\ref{sec:#1}}
\renewcommand{\refFig}[1]{figure\,\ref{fig:#1}}
\renewcommand{\RefFig}[1]{Figure\,\ref{fig:#1}}
\newif\ifsupresscomments
  \newcommand{\citeWhat}[1]{}
  \newcommand{\SR}[1]{}
  \newcommand{\SW}[1]{}
  \newcommand{\MK}[1]{[}
  \newcommand{\TBD}[1]{}
  \newcommand{\citeWhat}[1]{[{\color{blue}CITE:\color{red}{#1}}]}
  \newcommand{\SR}[1]{[{\color{magenta}(S.R.) \color{blue}\textsc{#1}}]}
  \newcommand{\SW}[1]{[{\color{magenta}(S.W.) \color{red}\textsc{#1}}]}
  \newcommand{\MK}[1]{[{\color{magenta}(M.K.) \color{cyan}\textsc{#1}}]}
  \newcommand{\TBD}[1]{[{\color{red}TBD:~{#1}}]}
\renewcommand{\inp}[2]{#1 \cdot #2}
\begin{document}

\title{Estimating Phase from Observed Trajectories Using the \ToF}
\author{Simon Wilshin} 
\altaffiliation{Royal Vet College, London, UK}
\author{Matthew D. Kvalheim}
\altaffiliation{University of Pennsylvania, Philadelphia, PA, USA}
\author{Clayton Scott}
\altaffiliation{University of Michigan, Ann Arbor, MI, USA}
\author{Shai Revzen}
\altaffiliation{University of Michigan, Ann Arbor, MI, USA}

\date{\today}

\begin{abstract}
Oscillators are ubiquitous in nature, and usually associated with the existence of an \concept{asymptotic phase} that governs the long-term dynamics of the oscillator. %
We show that asymptotic phase can be estimated using a carefully chosen series expansion which directly computes the phase response curve and provide an algorithm for estimating the co-efficients of this series.
Unlike all previously available data driven phase estimation methods, our algorithm can: (i) use observations that are much shorter than a cycle;  (ii) recover phase within any forward invariant region for which sufficient data are available; (iii) recover the phase response curves (PRC-s) that govern weak oscillator coupling; (iv) show isochron curvature, and recover nonlinear features of isochron geometry.
Our method may find application wherever models of oscillator dynamics need to be constructed from measured or simulated time-series.
\end{abstract}

\maketitle

\section{Phase as an organizing principle}
\label{sec:phaseasorganising}
Our interest in oscillators arose from their utility as models of animal locomotion \citep{revzen2015data, seipel2017conceptual}, but oscillators appear in virtually every physical science: in biological models \citep{kvalheim2021families} at all scales, from the coupled neuronal firing \cite{hoppensteadt2012weakly} to coupled oscillations of predator and prey populations \cite{may1972limit}; in chemistry \cite{epstein1998introduction}; in physics \cite{glazek2002limit}; in electrical \cite{van1934nonlinear}, civil \cite{duffing1918erzwungene}, and mechanical \cite{stoker1950nonlinear} engineering; etc.
In the typical case that transients decay at an exponential rate, the underlying mathematical structure of oscillators shares many properties across all these cases, in particular the existence of an \concept{asymptotic phase} which encodes the long term outcome of any (recoverable) perturbation.
We have discovered that by representing phase as a \concept{\ToF}, we could bring to bear interpolation tools from machine learning to allow asymptotic phase to be computed in a data driven way---directly from time series of measurements---and without a need to know the governing equations.

This will allow investigators to perform \concept{phase reduction} directly from experimental measurements, and thereby construct models of oscillatory systems and how they couple to each other for a broad swath of systems.
In this paper we present our algorithm applied to simulations with known ground truth, and some classical oscillators (Fitzhugh-Nagumo oscillator \cite{fitzhugh1961impulses,nagumo1962active}, Selkov oscillator \cite{sel1968self}) from the literature.
Additional simulation tests, and examples of use with animal locomotion data and chemical oscillator data are included, with the details of our simulations in the supplemental information (SI). These tests demonstrate the broad applicability of our methods.

\section{Differential equations}
Mathematical models in the physical sciences frequently use differential equations of order 1 or higher.
Newton's laws, and the equations governing electrical circuits are usually written as second order systems $\ddot x = g(x, \dot x)$, whereas first order systems govern $\dot x = f(x)$ simple chemical reactions.
By re-writing the second order equations in terms of positions and velocities (or momenta), the former can be reduced to a special case of the latter, and we will therefore consider first order systems.

Sometimes the solutions of $\dot x = f(x)$ are periodic.
We consider the case that the curve traced by such a solution attracts nearby solutions.
More specifically, we consider only the typical case that this convergence occurs at an exponential rate, which is equivalent to the periodic solution being (normally) \concept{hyperbolic}; for brevity, we will simply refer to such periodic solutions as \concept{limit cycles}.
We use the term \concept{oscillator} to describe the dynamics $f(\cdot)$ restricted to the set of initial conditions which, after a sufficiently long relaxation period, converge arbitrarily close to the designated limit cycle.
That set of initial conditions which evolve toward the limit cycle is termed the \concept{stability basin} of the limit cycle or the domain of the oscillator.

Once the system settles on (or arbitrarily near to) the limit cycle it oscillates with the same pattern repeatedly, allowing points from this closed curve to be invertibly mapped onto the unit circle so that they cycle at a constant angular rate $\omega$.
The angle of the image on this circle is what is commonly refered to as the \concept{phase (angle)} of the oscillator at that state.

This phase can be naturally extended to the entire stability basin thanks to the following observation: all initial conditions converge to the limit cycle, but points on the limit cycle forever remain apart.
To model the long term behavior of an oscillator we could represent each off-cycle point by a point on the cycle which most closely represents its long term behavior, constructing a phase map.
In the typical case that convergence to the limit cycle is at an exponential rate, such long-term representatives actually exist (and are unique), so, by extension, the phase angle of the representative can be taken to be the phase angle of all points it represents.

Since we can assign to every point in our stability basin a phase angle, we can divide up the stability basins into sections (hypersurfaces) with the same phase.
In the mathematics literature these are called \concept{isochrons}. %
The fact that isochrons are manifolds and therefore have no cusps or kinks is a non-trivial result \citep{shGuc75}. %
For a $D$ dimensional system this will be a set of $D-1$ dimensional hypersurfaces on each of which phase is constant.
In Euclidean space the derivatives of the phase are the normals of these surfaces.
These derivatives at the intersection points with the limit cycle govern the long term sensitivity of the system to small perturbations, and are called the \concept{phase response curves}.
They provide a complete first order description of the long-term responses of the oscillator to small perturbations.

\section{Estimating phase by series expansion}
We note that for our deterministic systems our phase, $\phi$ satisfies the following condition: $\phi(x(t)) = \omega t + \phi(x(0))$ when following a trajectory $x(\slot)$ for a system with angular frequency $\omega$.
This is obviously true on the limit cycle.
Off of the limit cycle, $x(0)$ and $x(t)$ are $t$ time units apart on the same solution if and only if $x(s)$ and $x(s+t)$ are as well for any $s$.
Considering $s$ values sufficiently large that $x(s)$ and $x(s+t)$ are effectively on the limit cycle, the result follows.
Rearranging slightly we have:
\begin{equation}
\phi(x(s+t))-\phi(x(s))=\omega t
\end{equation}
Taking the limit $t \to 0$:%
\begin{equation}\label{eqn:nablim}
\omega = \lim_{t\rightarrow 0} \frac{\phi(x(s+t))-\phi(x(s))}{t} = \left.\frac{d}{dt}\phi(x(t))\right|_s
\end{equation}
Since $x(s)$ is an arbitrary point, and $\dot x(s) = f(x(s))$ we obtain from the chain rule applied to \refEqn{nablim}:
\begin{equation}
\label{eqn:definingequation}
\inp{\dT(x)}{ f(x)} = \omega
\end{equation}
Here $d$ is intended to evoke the exterior derivative $\dform{}$\footnote{
  This is the exterior derivative and for Riemannian spaces when acting on a scalar is the operator $\nabla$ with its contravariant index lowered by the metric.
  Since our method is not metric dependent we use $\dform{}$ here, however readers unfamiliar with differential geometry can treat $\dform{}$ as $\nabla$.
}, which represents the set of first order partial derivatives when acting on a scalar.
But $\dT$ is actually not \emph{globally} the exterior derivative of any smooth real-valued function ($\phi$ is not continuous on all of $\Xs$), and we reserve the boldface $\dform{}$ for actual exterior derivatives.

This form has a few advantages.
First, it permits us to write down a point-wise condition our phase function must observe, rather than a condition about its differences along trajectories.
Second, phase angle has an arbitrary gauge in that it is defined only up to choice of a point at phase $0$, but the form $\dT$ does not and is, at least potentially, a physical quantity.
This coordinate free form which we dubbed the \concept{\ToF} (\refSupp{mathbg}) is unique for each oscillator, is mathematically equivalent to other representations of asymptotic phase (Theorem~\ref{th:phase-reps-relate} in \refSupp{equiv}), and can be approximated using our algorithm with roughly the same statistical efficiency as the law of large numbers for data sampling and system and measurement noise (Theorems~\ref{th:cost_funcs_close} and \ref{th:estimated_isochrons_approach_true_isochrons} and Remark~\ref{rem:convergence-rate} in \refSupp{tof_from_uncertain}).
This provable convergence property is unique among all data-driven phase estimation algorithms we are aware of.

As mentioned, the function $\phi$ does not have a conventional derivative everywhere.
First, it suffers a jump discontinuity when moving from $2\pi$ to $0$.
However, a suitable derivative can be calculated at any point by noting that:
(1) Under a different gauge $\tilde \phi(x) := \phi(x) - \theta_0$ for phase angle, the jump would occur at the isochron $\phi(x)=\theta_0$ instead of $\phi(x)=0$.
(2) Wherever they are both defined,  $\dform{\phi}= \dform{\tilde \phi}$.
Thus we may, in a self-consistent way, define $\dT$ on all of $\Xs$ to be given by  $\dform{\phi}$ or $\dform{\tilde{\phi}}$ depending on which quantity exists. 

Second, phase is undefined at the boundaries of the stability basin (and everywhere else outside the stability basin).
If the solution starting at point $y$ does not return to the limit cycle, $\phi$ is not defined at $y$.
If, in addition, $y = \lim_{k\to\infty} x_k$ of points $x_k$ that do return to the limit cycle, then $\phi(x_k)$ need not converge \footnote{%
  While it need not, it could; e.g. $\dot \theta = 1 + c r; \dot r = (r-2)(r-1)r$ has phase identically $\theta$ for $c=0$, but phase diverges at $r=2$ when $c\neq 0$.
}.

\section{Algorithm}\label{sec:algo}

\begin{figure*}[h]
 \centering
 \includegraphics[width=\textwidth]{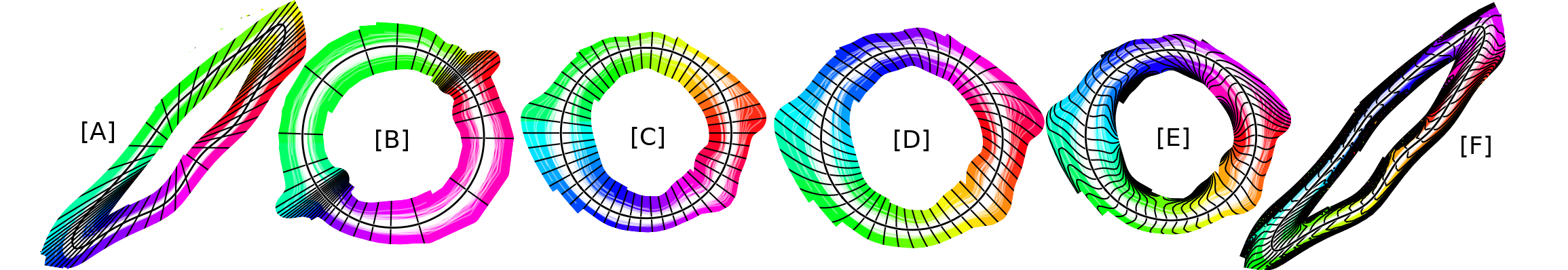}
 \caption{Steps of the algorithm, showing phase (as color; isochrons as contours), trajectories (white traces), and a limit cycle estimate (dark line) as applied to a randomly generated simulation system.%
 We projected the data down to its two principal components, and then built a limit cycle model from the polar angle in the circulation plane [A].
 Using this we rectified the radius to the unit circle [B], and then rectified the phase to a constant circulation rate [C]. %
 We then add basis terms (up to order $2$ [D], order $6$ [E]) and convert the resulting series approximation back to the original coordinates [F].
}
\label{fig:stepbystep}
\end{figure*}

Our algorithm takes as input pairs $(x_k,\dot x_k)$ $k=1\ldots N$ of observed states and state velocities, and learns both an angle valued function that takes states to phase, and a matching vector valued function giving the \ToF.

We perform our algorithm in the following way. First we rectify the data by constructing a smooth change of coordinates that maps an estimated limit cycle to the unit circle at constant angular rate.
Then subtract this constant circulation and expand the remainder with vector valued basis functions in the rectified coordinates and select their coefficients by minimizing the cost of a loss function which is zero for the residual produced from the true \ToF.
The basis functions we use here are derivatives of known scalar valued functions.
The sum of the scalar valued functions and the phase angle contribution of the circulation is our resulting angle valued phase estimate.

\subsection{Rectification}
The first step of our algorithm is fairly similar to Revzen \& Guckenheimer \cite{RevGuk08} in that it estimates the limit cycle, and computes a phase estimate that evolves uniformly on that limit cycle estimate.
However, here we use this estimate to construct a smooth coordinate change that rectifies the data so that the estimated limit cycle is the unit circle in the first two coordinates.

In particular, we translate the data to zero mean, then use principal component analysis (PCA) on the positions $x_k$ to rotate the two major covariance axes into the first two coordinates.
We assume that in these new coordinates the trajectories wind around the origin, and that data is mostly constrained to an annulus in this 2D \concept{circulation plane} comprising the first two coordinates.
Data which does not meet this assumption would require pre-processing before it could be used---some other transformation, more elaborate than PCA, would be needed to bring it to meet the circulation requirement.

Consider the data in cylindrical coordinates: scalar angle $\theta$ and radius $r$ for the circulation plane, and cartesian $z$ with the remaining coordinates of each data point (see \refFig{stepbystep} [A] showing such data).
We fit Fourier series $\hat r(\theta)$ and $\hat z(\theta)$ to the data, and thereby constructed an approximate model of the limit cycle.
This also allowed us to estimate the period $T$ as the rate of circulation of this limit cycle model.
Using the same approach as \textit{Phaser} \cite{RevGuk08}, we constructed a map $\hat\phi(\theta)$ so that $\left|2\pi t/T-(\hat\phi+\theta)\right|^2$ is minimised.
Using this we constructed the map mapping the input $x$ coordinates to rectified coordinates
\begin{align}
  (\theta,r,z) \mapsto (\theta - \hat{\phi}(\theta), r/\hat r(\theta), z - \hat z(\theta))
\end{align}
which rectifies the data to motion on the unit circle (see \refFig{stepbystep} [B]) at close to a constant angular rate (see \refFig{stepbystep} [C]).
We refer to the state $x$ in its rectified coordinates by the symbol $q$.

\subsection{Approximation by topologically motivated basis functions}

We reconstructed the \ToF\ by a series approximation using a basis \footnote{%
 We use the term \concept{basis} informally, as it is used e.g. in the notion of \concept{radial basis functions} in machine learning, to mean a collection of functions whose finite linear spans are dense in the function space of interest. %
} described below.
Most of the forms comprising this basis were themselves derivatives of real-valued functions; we will refer to such as \concept{basis forms}.

Unfortunately, it is not possible to reconstruct the \ToF\ from any linear combination of basis forms, for the following reasons.
Consider the change in the value of $\phi$ after a single cycle on the limit cycle.
For a single cycle this will be $2\pi$ (or $1$, or $360^\circ$ depending on the arbitrary units used).
This difference can be computed using a line integral of the \ToF\, which amounts to just integrating \refEqn{definingequation}.
If we build $\dT$ exclusively from sums of derivatives of any real-valued functions, this integral will necessarily be zero, since the sum of the integrals will just be the integral of the sum.
This demonstrates that the \ToF\ is not exact, i.e. it is not the derivative of any scalar valued function, and therefore cannot be the sum of basis forms.

The key insight that enabled our algorithm comes from algebraic topology  %
\footnote{
  Although the closed 1-form we seek is not exact, algebraic topology teaches us that, on the basin of attraction associated to an oscillator, any two closed 1-forms $\alpha,b$ can be related as $a=\omega b +c$ for some $\omega$ real and $c$ exact.
  Exact forms are exterior derivatives (``gradients'') of scalar functions.
  The authors are deeply indebted to Dan Guralnik for pointing out this insight on the de~Rham cohomology of oscillators.
  That this insight extends from $\Cont^\infty$ closed forms (used in de~Rham cohomology) to $\Cont^0$ closed forms follows from general results on smoothing currents \cite[pp. 61-70]{deRham1984differentiable}; alternatively, it is not difficult to give a direct proof of this result for oscillators.
} %
and is that we can always write our expression for the \ToF\ of one system, $a$, in terms of another \ToF\ for another system, $b$, via $a=\omega b +\dform{c}$ with $\omega$ real and $c$ real-valued. %
We chose to use a trivial oscillator for $b$ by using $d\theta$ with $\theta$ the angle in the plane.
This $d\theta$ is the \ToF\ associated with any oscillator obeying equations of the form $\dot \theta = \omega$ $\dot r = g(r)$.

\begin{table*}
  \begin{align}
  \dT(\theta,r,z) \defeq \thetaFrm +&\sum_\mu m_\mu \dform{v_\mu(\theta,r,z)}
    \label{eqn:series} \\
  &\mu \defeq (i,j,k) ~~ 0\leq i \leq n-2,~ j,k \in \Natural; ~~ v_\mu(\theta,r,z) \defeq  \xi_i(z) \rho_j(r) u_k(\theta) \nonumber
  \end{align}
  \vspace{-3mm}
  \begin{align}
  \text{\textbf{Name}}\hspace{1em} &\text{\textbf{Integral}} & &  \text{\textbf{Differential 1-form}} \nonumber\\
  \label{eqn:dthTerm}\text{angle}\hspace{1em}
  & \theta \approx \text{arctan2}(q_0,q_1)~~\text{\textit{(no global integral exists)}}&
  & \thetaFrm \\
  \label{eqn:FourierTerm}\text{Fourier}\hspace{1em}
  & u_k(\theta) \defeq a_k\cos(k\theta)+b_k\sin(k\theta)&
  &\dform{u_k}(\theta) =  k a_k\sin(k\theta)\thetaFrm-k b_k\cos(k\theta)\thetaFrm\\
  \label{eqn:polyTerm}\text{polynomial}\hspace{1em}
  &\rho_j(r) \defeq (r-1)^j &
  &\dform{\rho_j}(r) = j (r-1)^{j-1} \dform{r} \\
  \label{eqn:zTerm}\text{out of plane}\hspace{1em}
  &\xi_i(z) \defeq z_i ~\text{for i>0, or}~ 1 \text{, for i = 0}&
  &\dform{\xi_i}(z) = \dform{z_i} ~\text{for i>0, or}~ 0 \text{, for i = 0}
  \end{align} 
  \caption{%
    Basis forms and their integrals in rectified coordinates $\theta,r,z$.
    The general $\dT$ structure is in [\ref{eqn:series}]. %
    After the rectification step, $\thetaFrm$ is a trivial expression [\ref{eqn:dthTerm}], and provides the circulation needed to ensure that remaining terms can be exact---the sum of derivatives $\dform{v_\mu}$. %
    The $v_\mu$ basis functions are a first order expansion in $z$, via [\ref{eqn:zTerm}], a Fourier expansion in $\theta$ via [\ref{eqn:FourierTerm}], and a polynomial [\ref{eqn:polyTerm}] expansion in $r$.}
    \label{tbl:series}
\end{table*}

Our approximation algorithm proceeds by optimizing the parameters of \eqn{eqn:series} (in table \ref{tbl:series}) as indicated by the following expression, where the constant $C$ is selected to be consistent with the previously computed period:
\begin{equation}\label{eqn:definingopt}
\arg\min_{\dT}\sum_{i}\paren{\inp{\dT}{\dot{x}_i}-C}^2.
\end{equation}
Here we have multiple observations of state and its derivative, $(x,\dot{x})$ and hence $x$ and $\dot{x}$ have an index, $i$.
First, working in the rectified co-ordinates $q$, we selected a scale for $\thetaFrm(q)$ such that $\int_\cycle \thetaFrm(q)$ is the period of oscillation, where $\cycle\subset \Xs$ is the image of the limit cycle.
\begin{figure}[h]
  \centering
  \includegraphics[width=0.95\columnwidth]{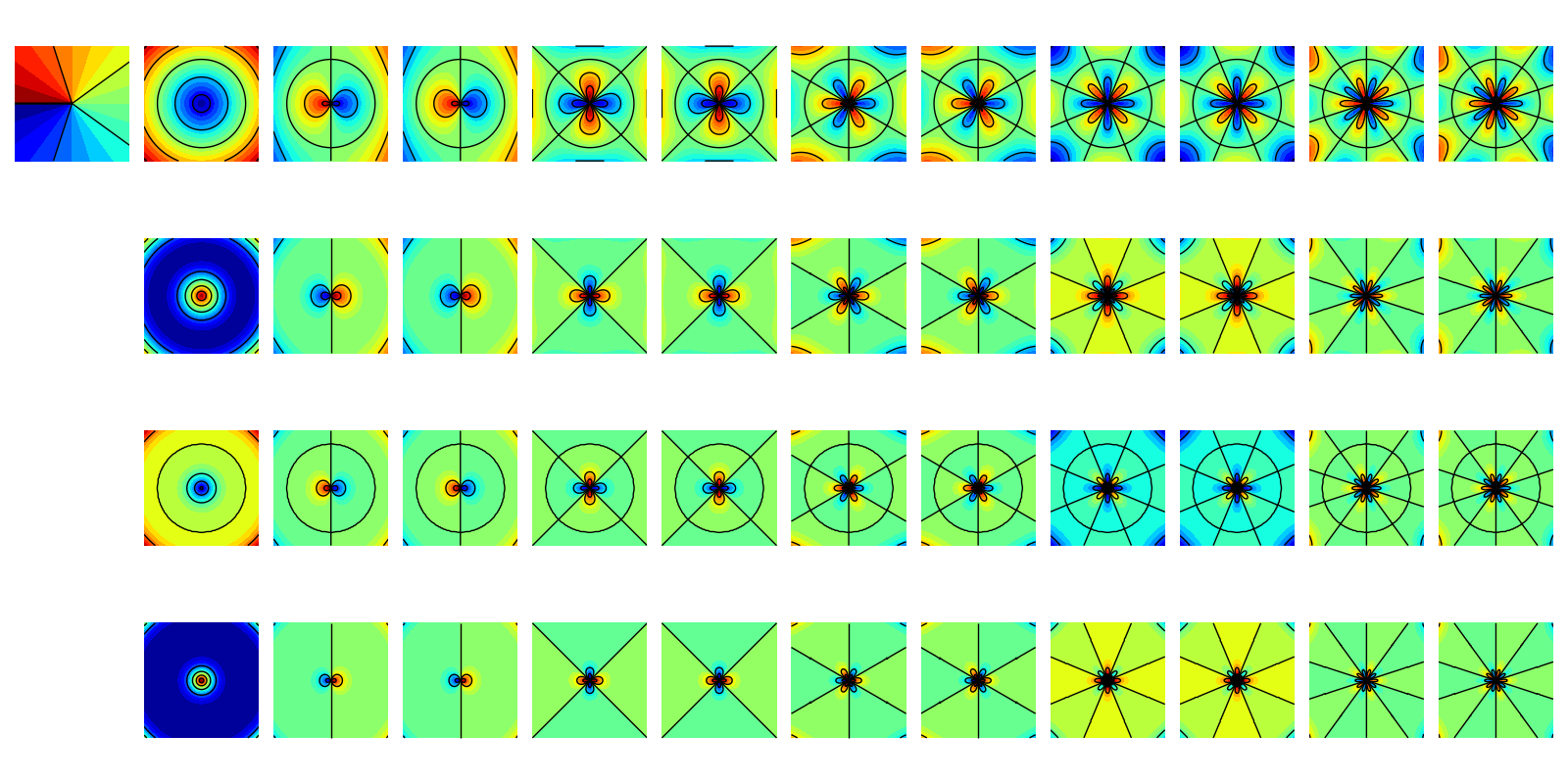}
  \caption{A plot of $u_k(\theta)\rho_j(r)$ terms for orders up to five.
  From left to right the order of the Fourier term increases and the cosine and sine terms alternate.
  From top to bottom we have increasing order of the radial polynomial term.
  In the top right corner is the $d\theta$ term.
  }\label{fig:frPolyVis}
\end{figure}

To find the remaining coefficients $m_\mu$ we solved
\begin{equation}\label{eqn:reg}
\arg\min_{ m_\mu, C } \sum_{ \mu,i } \paren{ m_\mu \inp{ \dform{v_i} (q_i) }{ {\dot q}_i } - \paren{ C - \inp{ \thetaFrm(q_i) }{ {\dot q}_i }}}^2.
\end{equation}
We note that \refEqn{reg} is a conventional ordinary least squares regression problem, and thus $m_\mu$ can be solved for using standard tools.
The contribution of individual Fourier-polynomial terms can be seen in \refFig{frPolyVis}.

\section{Performance of the new algorithm}
\subsection{Performance on 2D, 3D, and 8D simulations}
\label{sec:resultssim}
We used the approach described in Appendix \refSec{groundtruthdata} to produce three simulation systems of dimension two, three and eight.
The first is easy to visualize; the second is the minimal dimension in which general eigenvalues can appear in the Floquet structure; the third is more typical of the dimensionality of biological systems for which we developed this estimation method.
We examined the performance of three different estimates of the phase of these systems and compared the estimated phase to the ground truth provided by $\theta$ of \refEqn{floq}.

The first phase estimator we used is comparable to \concept{event-based} phase estimates commonly used in biological research.
Researchers often use distinguished events, such as voltage levels in the nervous system \cite{danner2015human}, footfall \cite{ting1994dss, jindrich2002dsr}, or anterior extreme position of a limb \cite{wnCru88} to identify the beginning of a cycle and presume that phase evolves uniformly in time between these events.
As our event, we used the zero crossing of the first principal component of the data.
The second phase estimator we used was our previously published \Phaser\ algorithm \cite{RevGuk08}.
The third phase estimator was derived from the \ToF\ as described in this paper, and is referred to as \concept{form phase}.
A visualization of the execution of the form-phase estimation algorithm is depicted in \refFig{stepbystep}.

Even though the phase we wish to reconstruct is that of a deterministic dynamical system, experimental data more closely resembles the sample paths of a stochastic differential equation.
We trained our phase estimation methods on one set of simulated sample paths and tested them on another, generated from the same stochastic dynamical system.
We calculated a model residual by subtracting the ground truth phase from the estimated phases obtained with each method.
We removed the trial-to-trial indeterminate phase offset by taking the circular mean of the model residual to be zero.

The simulations along with the code used to generate them are available at \url{https://purl.archive.org/purl/formphase}.
The results show that the form phase method has lower mean-square residuals than the \Phaser\ method, which in turn has lower mean-square residual than the event-based method; see residuals in \refFig{2dperformance}.

\begin{table}[ht]
\caption{%
  Variance of residual phase for different phase estimation techniques with different initial condition noise and system noise levels. %
}\small
\begin{tabular}
{|c|rrr|rrr|}
\hline
& & noise & & & res. var. & \\
D & initial & system & phase & event & \Phaser & form \\
\hline
2 & 0.1 & 0.01 & 0.1 & 0.128 & 0.0473 & 0.0185 \\
2 & 0.2 & 0.01 & 0.1 & 0.127 & 0.0456 & 0.0205 \\
2 & 0.1 & 0.02 & 0.1 & 0.135 & 0.0480 & 0.0199 \\
2 & 0.1 & 0.01 & 0.2 & 0.222 & 0.0885 & 0.0389 \\
3 & 0.066 & 0.0066 & 0.066 & 0.102 & 0.156 & 0.0164 \\
3 & 0.133 & 0.0066 & 0.066 & 0.102 & 0.0920 & 0.0189 \\
3 & 0.066 & 0.0133 & 0.066 & 0.108 & 0.0863 & 0.0214 \\
3 & 0.066 & 0.0066 & 0.133 & 0.159 & 0.0897 & 0.0252 \\
8 & 0.025 & 0.0025 & 0.025 & 0.0789 & 0.0232 & 0.0340 \\
8 & 0.05 & 0.0025 & 0.025 & 0.0776 & 0.0246 & 0.0455 \\
8 & 0.025 & 0.005 & 0.025 & 0.0816 & 0.0352 & 0.0671 \\
8 & 0.025 & 0.0025 & 0.05 & 0.0889 & 0.0273 & 0.0460 \\
\hline
\end{tabular}\normalsize
\label{tbl:simperformance}
\end{table}

\begin{figure*}[h]
 \centering
 \includegraphics[width=0.9\textwidth]{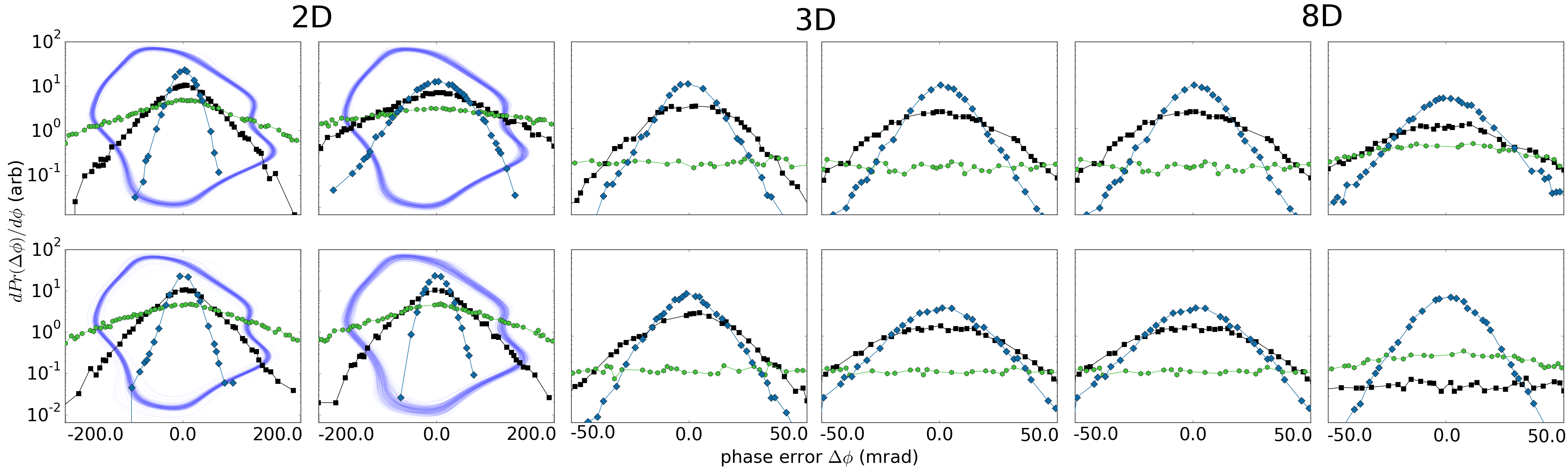}
 \caption{Comparison of phase estimators on 2D, 3D, and 8D systems. %
 Comparison of event phase (green) \Phaser\ (black) and form phase (turquoise) phase noise distributions on simulated trajectories. %
 Plots show four different conditions for each dimension: one baseline condition (top left), an increase in the magnitude of the stochastic diffusion term (top right),
 an increase in the levels of variability in the initial conditions (bottom left) and an increase in noise on the coordinate corresponding to phase in the equivalent deterministic system (bottom right; see \refTbl{simperformance}). %
}
 \label{fig:8dperformance}
\label{fig:2dperformance}
\label{fig:3dperformance}
\end{figure*}
\subsection{Selkov and Fitzhugh Nagumo oscillators}
\begin{figure}[h]
 \centering \includegraphics[width=0.9\columnwidth]{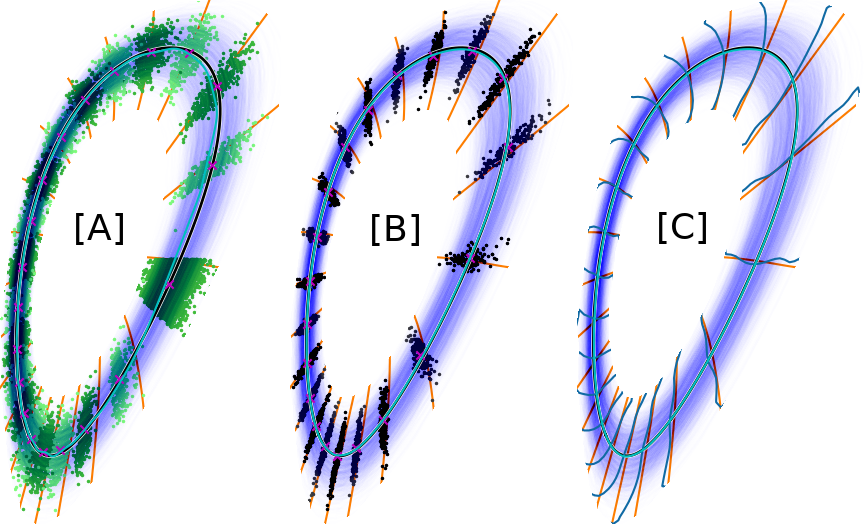}
 \caption{%
 Comparison of stochastically driven Selkov oscillator isochrons obtained from the form-phase, \Phaser\ and event-based phase estimates. %
 We plotted trajectories (blue), the limit cycle (true cycle---black, data driven estimate---teal) and ground-truth isochrons from forward integration (orange lines). %
 On top of those we plotted scatter plots of trajectory points falling in 20 equally spaced intervals of width 0.02 period; ([A], green, alternating shades), \Phaser\ ([B], black), and form-based phase estimates ([C], gray). %
 Event phase estimates become noticeably poorer away from event; \Phaser\ estimates remain equally tight, but their angle to the limit cycle---the PRC---is incorrect; Form phase isochrons closely match the forward integration \concept{ground truth}, that is to say the angle between the thin lines in [C] are more acute than the angle between the black dots and orange lines in [B])
 }
 \label{fig:isochroncomparison}
\end{figure}
\begin{figure*}[h]
  \centering \includegraphics[width=0.9\textwidth]{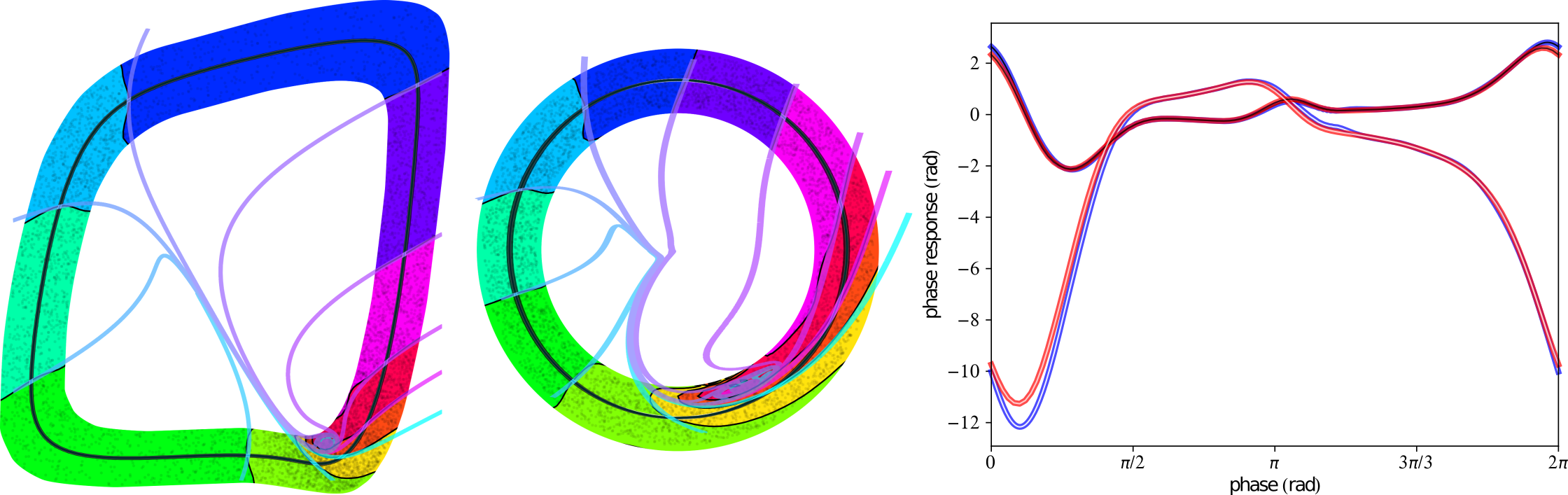}
  \caption{%
    \ToF\ based estimation of the FitzHugh-Nagumo oscillator isochrons (colored bands and solid black curves transverse to cycle). %
    We uniformly sampled points within an annulus around the limit cycle in the rectified coordinates ($6000$ points, black dots).
    We then estimated the \ToF\ with polynomial terms up to order six, and Fourier terms of order up to ten.
    For comparison, we chose the same parameters as fig. 4 from \cite{langfield2014solving} and overlaid their isochrons (pastel lines) on ours (black lines between colored regions).
    The results are shown in the original coordinates (left; limit cycle in black), and in rectified coordinates (middle). %
    The phase response curves (right) for this system as estimated by form phase (blue) and by forward numerical integration (in the sense used in \citet{wilshin2021phase}; red) for both $x$ (white line interior) and $y$ (black line interior) perturbations. %
    Adapted from \cite{langfield2014solving}, with the permission of AIP Publishing.
  }	\label{fig:fhncomparison}
\end{figure*}

One of the particular strengths of our algorithm is that it inherently estimates the phase response curves (PRC \citep{ermentrout1996type, wilshin2021phase}) of the system.
We tested this capability with the Selkov oscillator (\refFig{isochroncomparison}), and a classical neuronal oscillator model---the FitzHugh-Nagumo (\cite{fitzhugh1961impulses, nagumo1962active}, \refFig{fhncomparison}) system.

For Selkov, we took the two-dimensional system of the previous section and estimated the isochrons from three methods of phase estimation: event-based, taking phase to be the fraction of time between events; the \Phaser\ algorithm \cite{RevGuk08}; and the form-phase based algorithm.
We focused on the case of low system noise and highest initial condition noise, which corresponds closely with deterministic dynamics whose state space has been fully sampled.
We found that event-based estimates of the ground-truth isochrons are poor, with a large spread in position and poor agreement with the ground truth.
\Phaser\ performed well on the limit cycle, but the angle of the limit cycle to the estimated isochrons was incorrect, indicating an erroneous PRC estimate.
The form-phase estimate matched the deterministic isochrons far more closely.
We conclude that form-phase based phase estimates provided a superior estimate of the PRC.

Form-phase based estimates can also easily be used for obtaining isochrons of systems for which the equations are known, by using an importance sampling scheme.
\RefFig{fhncomparison} demonstrates this approach applied to the FitzHugh-Nagumo oscillator, and compares our results with those of \cite{langfield2015forward},  which is a state of the art method for systems with known equations of motion.
FitzHugh-Nagumo is particularly challenging as it is a \concept{relaxation oscillator} which produces rapidly changing spikes with gaps of inactivity between them in a neuron-like spike train.
Relaxation oscillators inherently have large changes in the magnitude of $\dT$ on the limit cycle, exacerbating the effects of numerical errors on the estimation of isochrons.
The results demonstrate that we can reproduce the isochrons next to the limit cycle, and thus correctly reproduce the PRC for the FitzHugh-Nagumo oscillator when the complete state is available for measurement, or when the conditions of \citet{wilshin2021phase} are met.
This suggests we could reasonably expect our new algorithm to recover the PRC of neuronal oscillators when provided with sufficient data.

\subsection{Phase estimation from partial data in guineafowl}
\label{sec:resultsanimal}
Perhaps the most significant feature of our algorithm for its intended users is that our algorithm requires only state and state velocity pairs to calculate a phase estimate.
It can be trained using short, disjointed training examples---even if no example contains more than a small portion of a cycle (Theorem~\ref{th:estimated_isochrons_approach_true_isochrons} in \refSupp{tof_from_uncertain}).
This is not true of any event-based phase, since the phase of all segments that do not contain the event cannot be determined at all.
Similarly, \Phaser\ cannot determine the phase of any data coming from a segment that does not cross the phase zero Poincar\'e section.
Furthermore, because it uses the Hilbert transform to produce a protophase, \Phaser\ requires each time series in its training data to be multiple cycles long---otherwise the Hilbert transform exhibits ringing artifacts.

We demonstrate the ability of our \ToF\ algorithm to recover phase from short segments of data in \refFig{cutperformancecomp}.
Not only does it recover phase from input that only contains trajectory segments from partial cycles, the new algorithm also produces a similar result even when some of the data is omitted systematically from a large part of space.
This dataset with systematic omissions does not meet the requirements of our estimator convergence result (Theorem~\ref{th:estimated_isochrons_approach_true_isochrons} in \refSupp{tof_from_uncertain})---roughly speaking, it does not have a uniquely defined phase because a portion of the limit cycle is not observed---but it still seems to be usable, for reasons which remain to be understood (cf. Remark~\ref{rem:noise-assump-generality} in \refSupp{tof_from_uncertain}).
\begin{figure}[h]
  \centering
  \includegraphics[width=0.90\columnwidth]{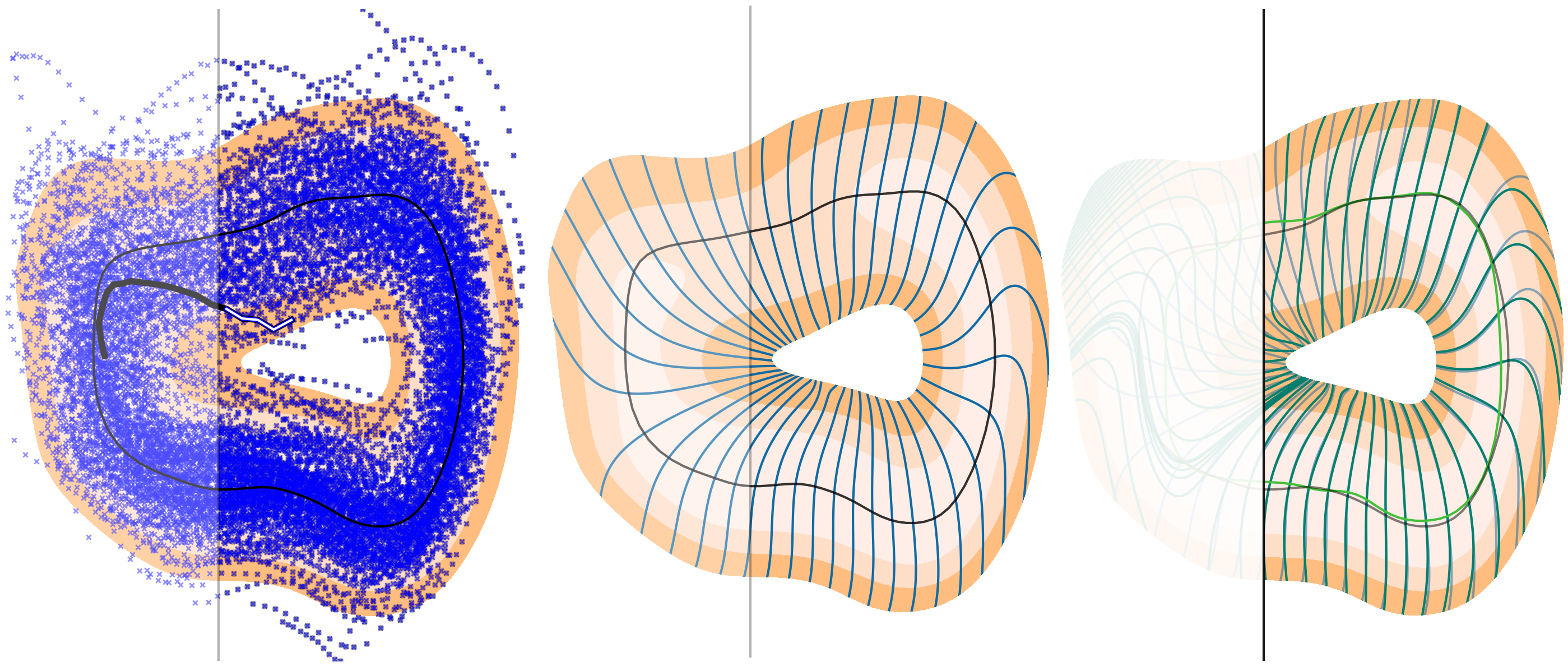}
  \caption{Estimating phase from partial data.
  Results for guineafowl foot data, post-processed adversely. %
  We randomly split \textit{Numida meleagris} foot motions (292 cycles; $250\text{sps}$; running at $3.\text{Hz}$) into $20$ sample segments with gaps $>40$ samples in between. %
  Segments were far shorter than cycles ([A] blue dots all data; example segment wide solid green). %
  We used the \ToF\ phase estimator (order six Fourier, and order six polynomial terms), plotted the limit cycle estimate (order 10 Fourier model; [B], solid black) and isochrons ([B] light teal, every $\pi/20$ radians). %
  We removed all data with $x<-0.5$ (arbitrary units; data is z-scored), about one third of every cycle ([A,B,C] vertical line; omitted data faded on left), and recomputed phase (order six Fourier terms; order 6 polynomial terms), and plotted the limit cycle ([C] solid green; same method) and the same isochrons ([C] dark teal; full data isochrons faded light teal). %
  We indicated amount of training data using a kernel smoothed density plot (50\%, 70\% and 90\% of max density; strong orange to pale orange). %
  Even with fractional cycles and systematic inability to observe a large fraction of the cycle, most structure is recovered in both treatments wherever $>70\%$ of max data density is available. %
  We remark that, while accuracy of the isochron estimates in [B] is to be expected from Theorem~\ref{th:estimated_isochrons_approach_true_isochrons} (\refSupp{tof_from_uncertain}), it remains to be understood why reasonable accuracy is obtained in [C] (cf. Remark~\ref{rem:noise-assump-generality} in \refSupp{tof_from_uncertain}).
  }
\label{fig:cutperformancecomp}
\end{figure}

\subsection{Application to chemical oscillators}
To illustrate how our algorithm could be applied to data from diverse sources, we obtained a sample of chemical oscillator data %
\footnote{Our thanks for these data go to Seth Fraden and Michael Norton.}
.
The raw measurements are seen in \refFig{chemRaw}.
In these relaxation oscillators the observable state is nearly constant away from their rapid ``spike'', and so it is difficult to extract useful state information far from these spikes.
To provide a better numerically conditioned state, after renormalizing the oscillations to account for baseline drift and reagent depletion (\refFig{chemLags} top), we passed the time series through a linear filter bank.
We passed each signal through three Butterworth lowpass filters of order 2, with cutoffs computed from the median inter-spike interval (ISI) as half ISI, ISI, and twice ISI.
We then used the difference of the first two as one set of ``lagged'' signals (\refFig{chemLags} middle), and the difference of the last two as a second set of ``lagged'' signals (\refFig{chemLags} bottom).
These lagged signals have the property of being fairly sinusoidal,  possessing phase responses close to $\pi/2$ apart, and having very little response at ``DC'' (frequency 0).
In short, they appear to provide a good augmented state-space for the phase estimator to operate on (and it seems likely that this intuitive statement can be made mathematically rigorous using methods similar to those of \citet[Sec.~3]{sauer1991embedology}).
We then took the resulting phase estimates as real numbers, unwrapped them, and subtracted their time-dependent mean to obtain the relative phases \citep{Revzen-TestArch07}, showing clearly the convergence into two pairs of anti-phase oscillators (\refFig{chemSync}).
\begin{figure}[h]
  \centering
  \includegraphics[width=0.95\columnwidth]{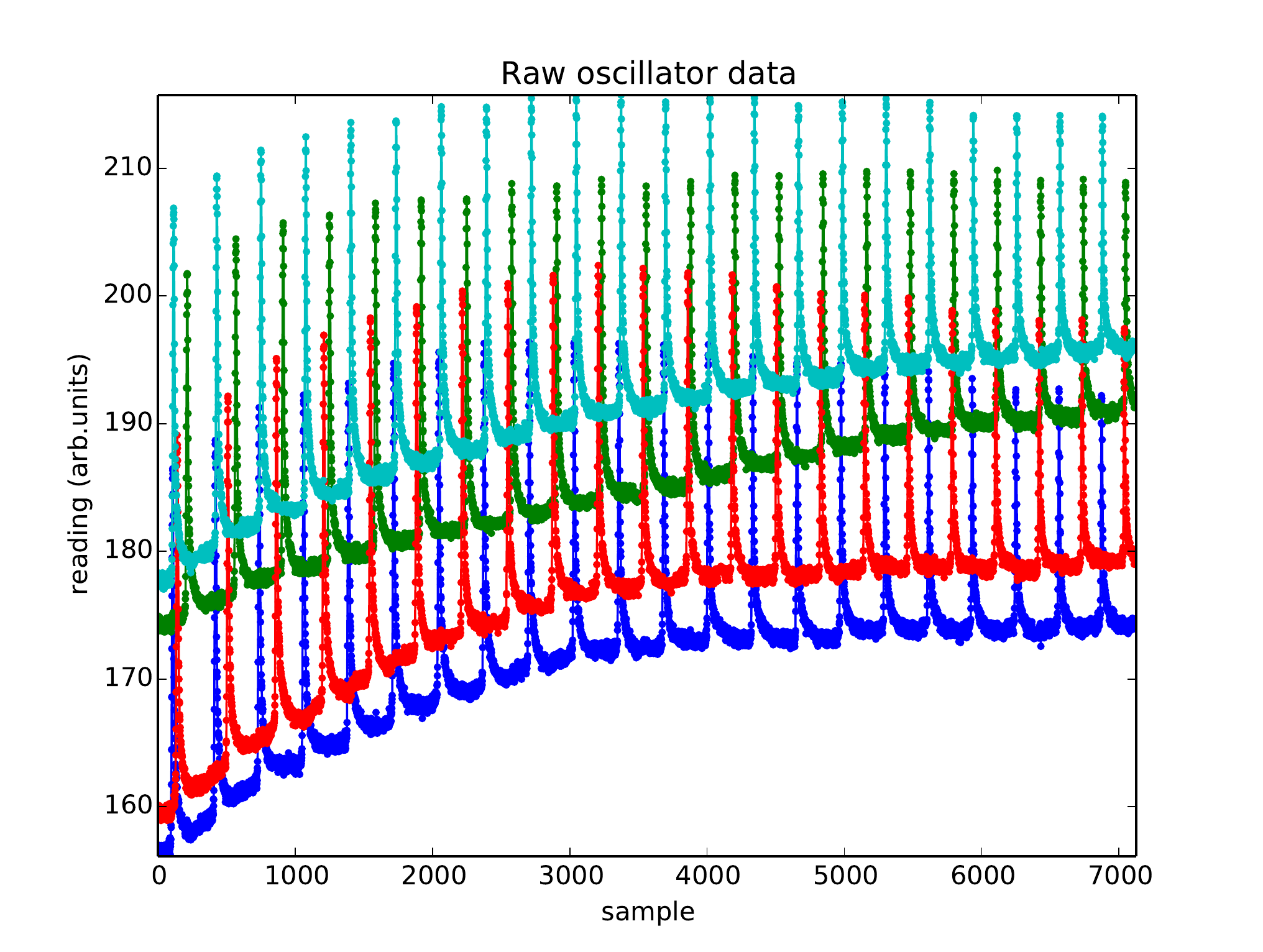}
  \caption{%
  Raw data of optical transmission through four coupled cells running a   Belousov-Zhabotinsky reaction.
  Physical units are not provided as phase estimation does not require it.
 }
\label{fig:chemRaw}
\end{figure}

\begin{figure}[h]
  \centering
  \includegraphics[width=0.95\columnwidth]{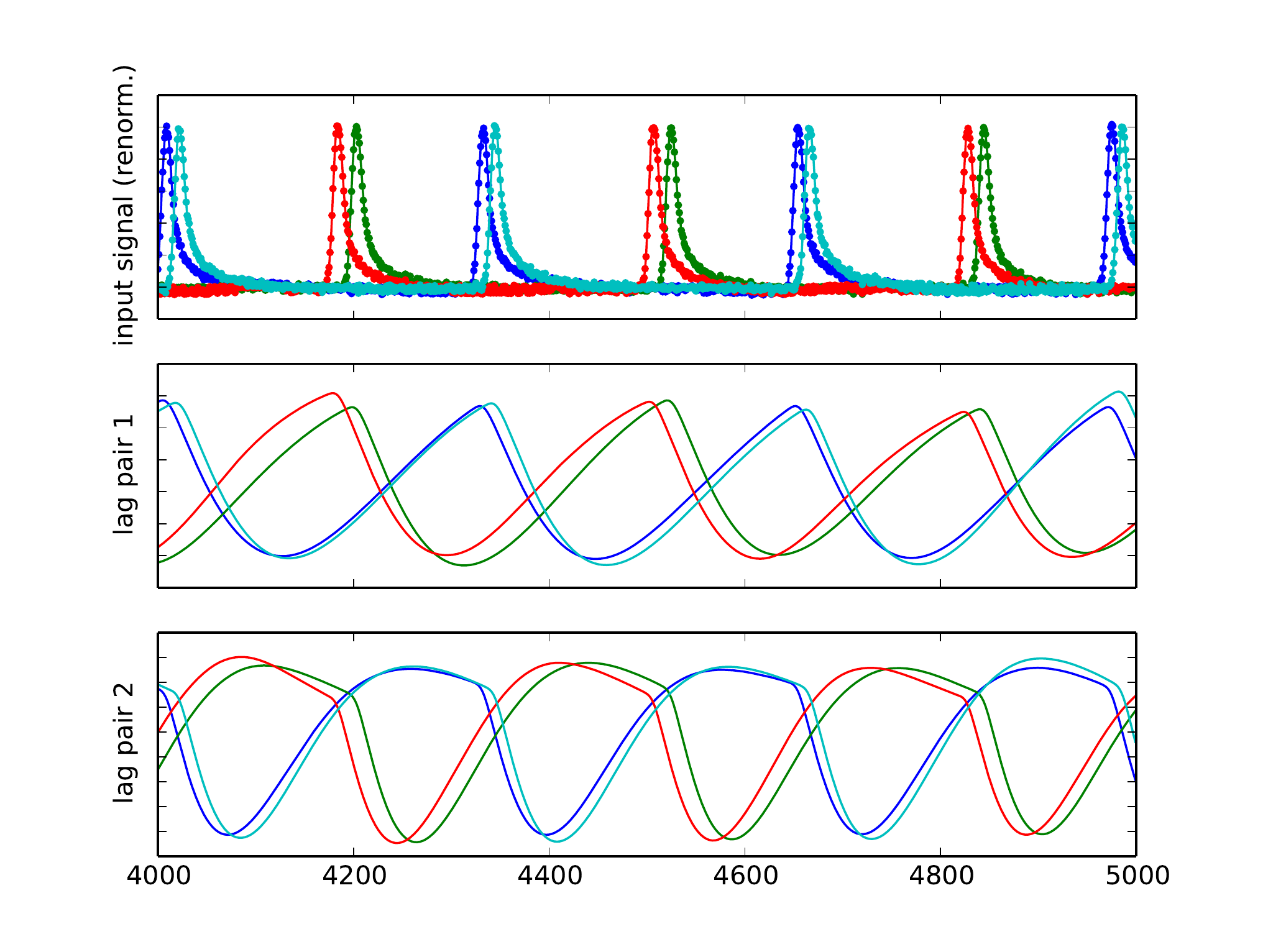}
  \caption{%
  Data preparation of chemical oscillator data.
  [top] detrended and rescaled time series. %
  We then computed the median inter-spike interval and filtered at 2 ISI (hereon s1), 1 ISI (s2), and 0.5 ISI (s3).
  [middle] the difference s1-s2; [bottom] the difference s2-s3
  }
\label{fig:chemLags}
\end{figure}

\begin{figure}[h]
  \centering
  \includegraphics[width=0.95\columnwidth]{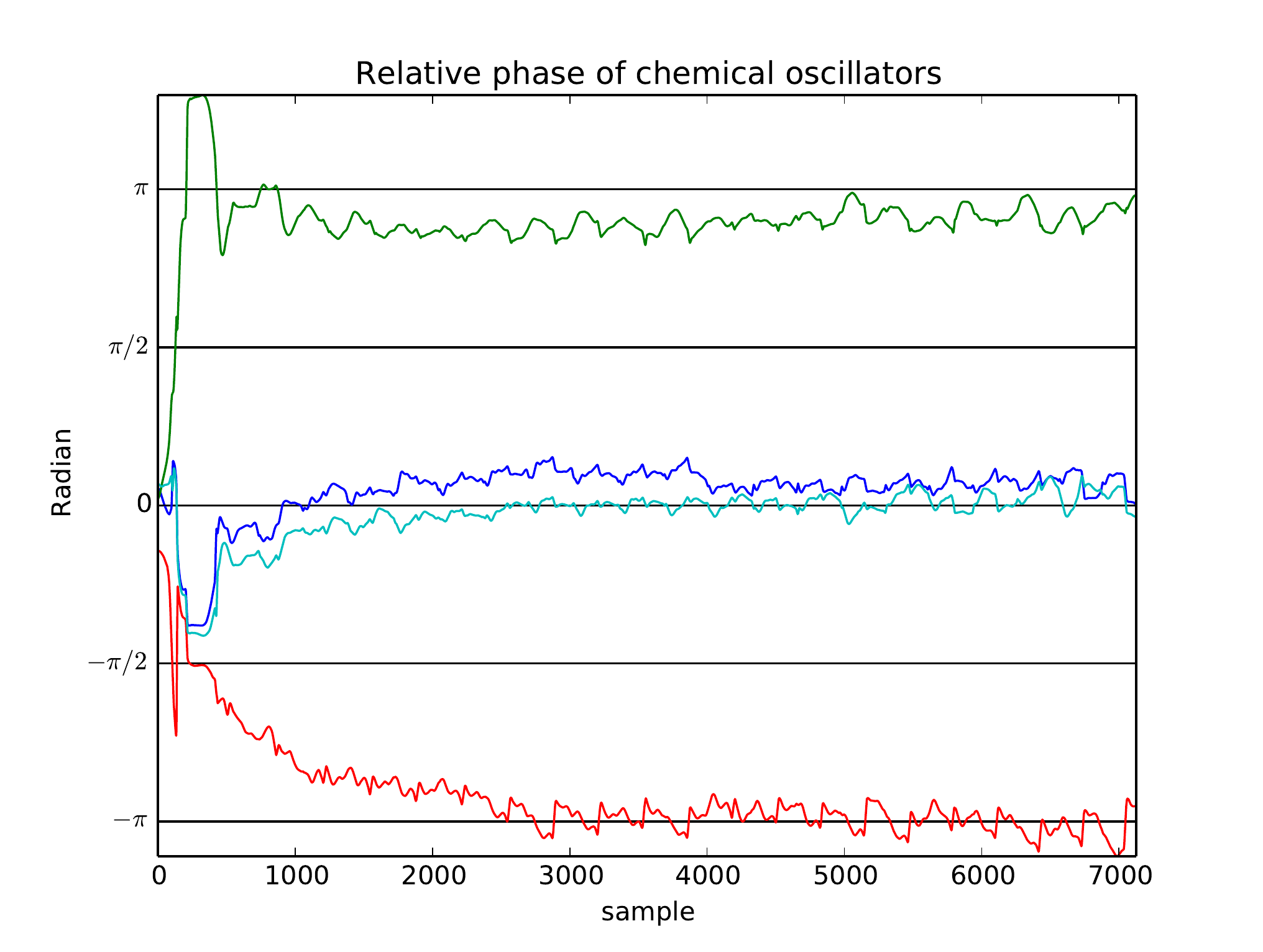}
  \caption{%
  Relative phases of the oscillators in \refFig{chemRaw}, taken by training a phase estimator on the data, and subtracting the mean phase of all the oscillators.
  }
\label{fig:chemSync}
\end{figure}

\section{Conclusion}\label{sec:conclusion}

The definition of the \ToF\ we provided shows that for smooth exponentially stable oscillators the phase obtained from the \ToF\ agrees with the classical definition of asymptotic phase.
Unlike the classical definition, the \ToF\ makes evident (\refSupp{equiv}) that asymptotic phase is in fact uniquely determined by a pointwise condition on forward invariant sets.

This theoretical property allowed us to develop the presented algorithm and its performance guarantees (\refSupp{tof_from_uncertain}) for approximating the \ToF\ of an oscillator from which a sample of noisy trajectory segments is available.
We have shown this algorithm performs well on both simulated and experimental data.
On simulated data of the Fitzhugh-Nagumo oscillator, a challenging relaxation oscillator system with known equations of motion, it performs comparably to the state-of-the-art algorithms which use the equations, even though our algorithm is entirely data driven, and does not require the dynamics themselves to be integrated.

Our algorithm offers significant improvements over previously known methods for phase estimation.
\begin{enumerate}
\item Most importantly, the ability to estimate phase from state and state velocity pairs $(x,\dot x)$.
 This capability in particular would make it attractive for modeling systems for which we are unable to obtain records of full periods of oscillation.
\item The ability to directly obtain phase response curves from experimental time series data.
\item More generally, the ability to estimate phase far from the limit cycle, in any forward invariant region containing enough data (Theorem~\ref{th:estimated_isochrons_approach_true_isochrons} in \refSupp{tof_from_uncertain}).
\item All these abilities come from an estimation procedure with provable convergence and convergence rates under certain assumptions (\refSupp{tof_from_uncertain}).
\end{enumerate}

No other phase estimation method we are aware of possesses all of these important properties.

Because the \ToF\ can be estimated from short time series segments, it offers the opportunity to study a broad range of oscillatory phenomena that were previously hard to analyze---either because of prohibitively long oscillation periods, or because of technical difficulties in obtaining multi-period measurements.
We have shown that a \ToF\ estimator can be successfully trained on short fragments of cycles, and will still estimate the isochrons accurately and consistently (e.g. the oscillator associated with the guineafowl feet).
Event-based methods such as anterior extreme position detection \citep{wnCru88} or heel-strike \citep{ting1994dss, jindrich2002dsr} detection cannot use such fragmented data as the decisive event need not appear in every fragment of data.
The \Phaser\ algorithm cannot solve this problem as it performs a Hilbert transform to calculate its initial proto-phases, introducing transients on the order of 1 to 3 cycles.

The \ToF\ allows one to detect the phase response to perturbations of the underlying deterministic dynamics.
The phase response describes only the effect of perturbation that persists after a long time, and even the infinitesimal phase response approximation---the phase response curve (PRC)---allows the observed system to be modeled in the weakly coupled oscillator approximation.
This PRC is available directly from the \ToF\ phase estimation procedure, paving the way to empirically generated oscillator coupling models in many domains.

For scientists studying oscillatory systems, a phase-based model allows the experimentalist to compare the outcome of experimental treatment (e.g. a mechanical perturbation to locomotion) to that of the counter-factual unperturbed system modeled as a deterministic function of phase \citep{Revzen-TestArch07}.
The results in \refSec{resultssim} indicate that for systems of different dimensions, levels of diffusion, and initial condition noise, approximating phase using a \ToF\ produced by the algorithm presented here is superior to event-based phase estimates commonly used in the experimental literature, and to the \Phaser\ algorithm.
In the examples we studied, the mean-square error of the phase estimate from the form-based technique is smaller.
The inferred isochrons are closer to the ground truth, and the phase response curves are more accurate.

Future work may extend our algorithm by application of domain-specific knowledge, in particular by incorporating new basis forms and coordinate change methods.
Such extensions would certainly speed up the algorithm and improve its accuracy.
Another future direction could include more general large-scale numerical solvers which could be applied to approximating the \ToF\ without resorting to explicit basis function expansions.

The \ToF\ offers a new perspective on the notion of phase in nonlinear oscillators, as well as a new way to obtain oscillator and PRC models from experimental data.

\subsection*{Acknowledgements}
We are deeply indebted to John M. Guckenheimer for conceiving of the \ToF\ and providing guidance and comments.
We also thank Dan Guralnik and G. Bard Ermentrout for valuable insights.
This work was supported by supported by EPSRC grant EP/H04924X/1, BBSRC grant BB/J021504/1, and U. S. Army Research Office under contract/grant number 64929EG supporting S. Wilshin,
ARO W911NF-18-1-0327 and ONR N000141612817 (held by D. E. Koditschek) supporting M. D. Kvalheim,
NSF Grants 1422157 and 1838179 to C. Scott, and ARO Young Investigator Award \#61770 and W911NF-14-1-0573 to S. Revzen.
The guineafowl kinematics were gathered under the supervision of M. Daley with the support of a Human Frontier Science Program grant (RGY0062/2010) and ethical approval was obtained from Royal Veterinary College Ethics and Welfare Committee under the protocol title, ``Kinematics and kinetics in birds running over uneven terrain''.
The optical transmission data for the Belousov-Zhabotinsky reaction were obtained from S. Fraden and M. Norton of Brandeis University, under awards from the National Science Foundation NSF DMREF-1534890 and U. S. Army Research Laboratory and U. S. Army Research Office W911NF-16-1-0094.

\bibliography{all}

\clearpage
\appendix
\newcommand{\Heading}[1]{\subsection{#1}}
\newcommand{\subHeading}[1]{\subsubsection{#1}}
\section*{Supplemental Information (SI)}\label{sec:appendix}

\section{Algorithm notes}
The Supplemental Information is written with the mathematically inclined reader in mind. As such, we presume familiarity with fundamental concepts of differential topology \citep{guillemin1974differential,hirsch1976differential,lee2013smooth}, differential forms \citep{spivak1971calculus,guillemin1974differential,lee2013smooth}, and stochastic differential equations \cite{arnold1974stochastic,gardiner2004stochastic,evans2013stochastic,oksendal2013stochastic}.
\Heading{Generating ground truth test data---details}
\label{sec:groundtruthdata}

This section provides details of the method we used to generate randomized test systems with adjustible complexity and noise characteristics.
We provide these details in the hope that other investigators may use them to evaluate the performance of algorithms which aim to analyze the structure of nonlinear oscillators.

Very few oscillators admit a closed-form representation of their phase as a function of state, making validation of our algorithm a significant challenge all on its own.
To produce a well specified class of tests for our algorithm, we produced sample paths arising from a Stratonovich stochastic differential equation of the form
\begin{align}
\Der x(t) = f(x(t)) \Der t + G(x(t)) \circ \Der W(t)\label{eqn:genSDE},
\end{align}
and requiring that the deterministic dynamics of $\Der x(t) = f(x(t)) \Der t$ are known to be a (hyperbolic) limit cycle oscillator, with a known phase coordinate.

We obtained such a deterministic oscillator by starting with a randomly chosen linearized (Floquet) structure converging on the trajectory $x(t) = [t,0,\ldots]$, then winding this affine system around the unit circle with $x_1$ as the angle, and finally pushing forward the dynamics through a composition of randomly generated diffeomorphisms to generate $f(\cdot)$ of \refEqn{genSDE}.
We chose diffeomorphisms such that their tangent maps were computable and invertible in closed form.
We used a second composition of random diffeomorphisms to generate a function $g(\cdot)$, and the inverse of the tangent map of $g(\cdot)$ was used for $G(\cdot)$.
By using the inverse diffeomorphisms, we directly computed the phase of each trajectory point with respect to the deterministic dynamics, to use as a baseline for comparing with other forms of phase estimation.

\subHeading{Implementation of the Floquet Structure}\label{sec:floq}
While our exposition so far focused on differential forms, allowing us to emphasize the coordinate-invariant properties of the \ToF, we provide the equations in this section in a computationally friendly coordinate dependent matrix notation.

Given an $n$-dimensional state space ($n>1$), we defined a vector field by first taking the polar decomposition $\Omega:\, \Real^n \rightarrow S^1 \times \Real^+ \times \Real^{n-2}$ of the first two coordinates: $p \defeq [x_3, x_4,\ldots]$, $\theta \defeq\mathrm{atan2}(x_1,x_2)$, $r\defeq \sqrt{x_1^2+x_2^2}$ and thus  $\Omega(x) = [\theta, r] \oplus p$.

In these polar coordinates, the equation of motion we implemented is affine:
\begin{align}
\dot \theta &= 1 \nonumber\\
\dot r &= \beta (r-1) +  c_r^\T p \\
\dot p &= \gamma (r-1) + M p \label{eqn:floq} \nonumber
\end{align}

This means that $\dot \theta$ is, for a deterministic system, uniformly advancing in time, and the latter two equations comprise an autonomous linear subsystem which evolves independently of $\theta$.
These equations can be combined to form a single equation in homogeneous coordinates:

\begin{align}
[\dot \theta, \dot r, \dot p, 0 ]^\T = \tilde M [ \theta, r, p, 1 ]^\T
\end{align}
where the matrix $\tilde M$ can be chosen to obtain any eigenvalue structure desired for the dynamics.

\subHeading{A natural class of invertible diffeomorphisms}\label{sec:henon}

To produce randomized invertible diffeomorphisms, we used a structure inspired by the H\'{e}non map \citep{henon1976two} (and therefore refer to these as H-maps) and recommended to us by J.M. Guckenheimer.
Given: A (split) vector space $\Spc{Q} \defeq \Spc{X} \oplus \Spc{Y}$; Invertible maps $g_X:\, \Spc{X} \rightarrow \Spc{X}$ and $g_Y:\, \Spc{Y} \rightarrow \Spc{Y}$; and a (possibly non-invertible) map  $f:\, \Spc{Y} \rightarrow \Spc{X}$, we constructed the following mapping:

\begin{align}
&\tilde{x} \defeq g_X(x) + (f \circ g_Y)(y) &
&\tilde{y} \defeq g_Y(y),
\end{align}

which admits the inverse (by construction):

\begin{align}
& x \defeq g^{-1}_X\left( \tilde{x} - f (\tilde{y}) \right) &
& y \defeq g^{-1}_Y(\tilde{y}).
\end{align}

In our implementation we have chosen to make $g_X$ and $g_Y$ affine maps.
For our nonlinear map $f(\cdot)$, we used inversions of the form

\begin{align}
f(x) & \defeq m\paren{\beta+2}\frac{\varrho\paren{x}P\paren{x}}{\paren{\varrho\paren{x}^2 + \beta\varrho\paren{x} + 1}} \\ \varrho(x) & \defeq \paren{x^\T A x},
\end{align}

for some positive symmetric $A$ with eigenvalues close to one (uniformly randomly sampled between $0.95$ and $1.05$ for our simulations).
$f$ is a function which is approximately zero when $\varrho\paren{x}$ is small and large, and is stationary with value $m$ at $\varrho\paren{x}=1$.
It has one stationary point for positive values of $\varrho\paren{x}$ with $\beta$ controlling how ``peaked'' the function is (the more negative beta is, the greater the ``peaking'').
$P$ maps from $\Spc{X}$ of dimension $D_X$ to $\Spc{Y}$ of dimension $D_Y$ and the components of $P$ are given by.

\begin{equation}
P_{ij} = \left\{ \begin{array}{ll}
1 & \mbox{if $i \equiv j \,[\mod D_X]$} \\
0 & \mbox{if $i \not\equiv j\,[\mod D_X]$}
\end{array} \right.
\end{equation}

with $1 \leq i \leq D_Y$ and $1 \leq j \leq D_X$.

We composed together a sequence of these maps to produce a highly nonlinear, but invertible, distortion of the Floquet system after it has been transformed to polar coordinates.
Between each pair of consecutive H-maps, we inserted a randomly chosen full-rank affine transformation.

\subHeading{Stochastic integration of the combined system}

We implemented individual diffeomorphisms as objects of a \texttt{Python} class using the {\sffamily SciPy} scientific programming environment.
Each mapping object was capable of forward and inverse mappings, pushforwards and pullbacks.
We composed these mapping objects with each other to construct the fully formed random diffeomorphisms for both the deterministic and noise terms of \refEqn{genSDE}.
The noise term uses the Jacobian of an H-map chain to transform the Wiener process when integrating the SDE.
The final integrated Floquet system is then transformed by a different H-map chain.
We integrated the SDE of \refEqn{genSDE} using an implementation of the R3 stochastic integration scheme \citep{Bastani2007, BurragePhD}, and have made this integrator available publicly at \url{http://github/BIRDSlab/BIRDSode}.
We also note that while the ``ground truth'' isochrons of the deterministic system are known precisely, they may not be compatible with various ``ground truth'' definitions of stochastic isochrons \citep{schwabedal2013phase,thomas2014asymptotic,cao2020partial,engel2021rdsphase} for the SDE.
For example, the presence of noise can change the average frequency of a limit cycle oscillator, and by extension, may deform the isochrons.
We expect that while at the low noise limit our recovered phase will closely match the deterministic system (this expectation is mathematically justified  in \refSec{tof_from_uncertain_core_result}), as noise is increased the discrepancy between the two may {\em justifiably} increase.
We therefore caution the reader that in the presence of consistent systematic errors and higher noise conditions, errors in estimates of asymptotic phase may represent an error in the ``ground truth'' rather than an estimation error on the part of the algorithm.

\subHeading{Preparation of data for phase estimation}

In each simulation, we treated these $D$ dimensional data similarly to how experimental data would be treated.
We filtered the data using a Kalman smoother \cite{Briers-2009-smo} with the system states the position and its derivative.
The state transition matrix assumed no acceleration and that the last $D$ coordinates were derivatives of the first $D$ coordinates.
The observation matrix was a $D\times 2D$ matrix with an identity in the first $D\times D$ sub-matrix.
We assumed the system noise matrix to be diagonal with the first $D$ system variables having the same level of variance, and the last $D$ variables having a (typically) different shared level variance.
We estimated these two system covariance parameters by maximizing the likelihood of the training observations.

After filtering, we performed a principal component analysis and rotated the system into the corresponding orthogonal coordinate system.
We then z-scored the first two principal components.

\Heading{Review of differential forms}\label{sec:dforms}

Differential forms $\Omega$ over $\Xs$ are an exterior algebra generated using two operators: the \concept{exterior product} $\wedge$---the universal skew symmetric product, and the \concept{exterior derivative} $\Deriv$.
Elements of rank 0 correspond to (sufficiently smooth) scalar valued functions $\Xs\to\Complex$.
When $\alpha\in\Omega_r$ and $\beta\in\Omega_s$, $\Deriv \alpha \in \Omega_{r+1}$ and $\alpha\wedge\beta \in \Omega_{r+s}$.

These operations are familiar to many in their 3-dimensional special cases, where ranks $\Omega_0 \ldots \Omega_3$ correspond to scalar functions, vectors, directed areas, and directed volumes.
Here the exterior derivative $\Deriv$ functions as the gradient, curl, and divergence.
The exterior product $\wedge$ functions as a scalar-vector product when one argument is rank 0, as a cross product (resulting in a directed area) when the arguments are rank 1, and as a box-product when applied to rank 2 and rank 1 arguments.

Formally defined, $\Cont^k$ differential 1-forms $\alpha$ on $\Xs$ are $\Cont^k$ sections of its cotangent bundle $\T ^*\Xs$: $\alpha(x) \in \T ^*_x \Xs$.
If $v$ is a $\Cont^k$ vector field, then $\langle \alpha, v \rangle$ is a $\Cont^k$ scalar function.
If $\alpha$ is a $\Cont^1$ curve in $\Xs$, the line integral $\int_\alpha \alpha$ is well defined and independent of coordinate system.

A 1-form $\alpha$ is \concept{closed} if its exterior derivative vanishes.
It is \concept{exact} if it is the exterior derivative of a scalar function.
The identity $\Deriv \circ \Deriv = 0$ implies that all exact forms are closed.
On contractible regions of $\Xs$ the converse is true, i.e. all closed forms are exact (a.k.a. the Poincar\'e Lemma).
The quotient space of closed to exact $k$-forms is known as the $k$-th de-Rham cohomology group.
We use some properties of this cohomology to construct our algorithm.

If a closed curve $\gamma_1$ can be smoothly deformed into a curve $\gamma_2$, $\int_{\gamma_1} \alpha$ of a closed 1-form $\alpha$ equals $\int_{\gamma_2} \alpha$.
In other words, the integral of a closed 1-form depends only on the homotopy class of the underlying closed curve.

A more complete introduction to the theory of differential forms can be found in \cite{spivak1971calculus,guillemin1974differential,lee2013smooth}.

\subsection{\ToF}
\label{sec:mathbg}
This self-contained section serves as an alternative introduction to our paper which contains more details and is aimed at mathematically inclined readers.
It includes a description of the \ToF\ assuming some familiarity with differential forms, which are reviewed in \refSec{dforms}.

We consider continuous-time dynamical systems, or flows $\FlwSym$, on a smooth $n$-dimensional manifold $\Xs$.
That $\FlwSym$ is a flow means that $\FlwSym:\Xs \times \Real \to \Xs$, $\FlwSym_{0} = \textnormal{id}_\Xs$, and $\FlwSym_{t}\circ \FlwSym_{s}=\FlwSym_{t+s}$.
We assume that the vector field $f \defeq \frac{\partial}{\partial t}\FlwSym_{t}|_{t = 0}$ is well-defined and $\Cont^{k\geq 1}$, so that $\Phi\in \Cont^k$.
The trajectories $t\mapsto \Flw{t}{x_0}\eqqcolon x(t)$ satisfy the ordinary differential equation (ODE)
 $\dot x = f(x)$.
A periodic orbit $o$ of period $T>0$ is a trajectory that satisfies $o(t+T) = o(t)$ with $o(t) \ne o(0)$ for all $0< t < T$.
We will always assume that $T>0$ is the \emph{minimal} period of a nonstationary periodic orbit.
We focus on dynamical systems possessing only one nonstationary periodic trajectory $o$ and denote by $\cycle\subset \Xs$ the image of this $o$.

We further assume that this periodic trajectory is exponentially stable, a property that holds in many practical cases.
Exponentially stable limit cycles are always \concept{normally hyperbolic} \citep{hirsch-invariant-1977, eldering2018global}; for brevity we will simply refer to these as \concept{limit cycles}.
We refer to the stability basin of a limit cycle and the dynamics within it as an \concept{oscillator}.
The asymptotic behavior of any oscillator is described fully by its \concept{asymptotic phase}:
the stability basin of an oscillator is partitioned into codimension-$1$ $\Cont^k$ embedded submanifolds \cite[pp.~4208--4209]{eldering2018global} called \concept{isochrons} which are asymptotically in phase with one another \cite{shGuc75,dk.Winfree.book}.

For systems in which the dimension is low, the noise is small or the data is plentiful, data pairs $(x_k,{\dot x}_k),~k=1\ldots N$ may be used to estimate the vector field $f$.
Numerous investigators proposed such approaches, primarily in the 1990s.
Classical papers include Kostelich and Yorke \cite{shKos90}, who fit arbitrary continuous dynamics to trajectories, and the rich literature on time-delay embeddings, derivatives and principal components \cite{takens1981detecting,sauer1991embedology, schelter2006tsa, gibson1992}.
When dynamical equations are known precisely, automatic differentiation and continuation methods can be used to compute the isochrons, a problem equivalent to that of phase estimation \cite{huguet2012, osinga2010isoch}.
However, care must be taken if it is desired to extract phase response curves from this, as these quantities are coordinate dependent \citep{wilshin2021phase}.

With fewer data points and higher levels of noise, one expects that the full off-cycle dynamics are less amenable to accurate estimation, and that only a few of the slower-decaying ``modes'' might be observable off of the limit cycle \cite{revzen2010fdsd}.
At the extreme, only phase itself might be detectable, through methods such as we proposed in Revzen \& Guckenheimer \cite{RevGuk08} using various phase-locking approaches \cite{peters2008, schall-atkeson-2010, schaal2005learning}.
This motivates the main goal of the present paper: estimate asymptotic phase from empirically observed dynamics, e.g., from an ensemble of noisy measurements of (possibly short and noisy) system trajectory segments.

We note that, while there has been work to define generalized notions of asymptotic phase for stochastic oscillators \citep{schwabedal2013phase,thomas2014asymptotic,cao2020partial,engel2021rdsphase} (see \cite{pikovsky2015comment,thomas2015reply} for a spirited discussion), in this paper we restrict ourselves to estimation of the classical asymptotic phase of a deterministic oscillator using data from a perturbed version of the underlying deterministic system.
We also note that there are operator-theoretic methods of recent interest revealing phase as the generator of a family of eigenfunctions of the Koopman operator with purely imaginary eigenvalues \cite{mauroy2012use,kvalheim2021existence,kvalheim2021generic}.
These eigenfunctions and others can be estimated using Fourier/Laplace averages \cite{mauroy2012use,mauroy2013isostables, kvalheim2021existence, kvalheim2021generic, brunton2021modern} when dynamical equations are known, and from data using Dynamic Mode Decomposition \cite{kutz2016dynamic}, at least in certain cases.
However, the results of these emerging spectral methods seem to be sensitive to the choice of observables and their nonlinearities \citep{wu2021challenges}.

Several investigators have pursued the construction of linearized (Floquet) models for dynamics near the limit cycle.
We proposed an approach termed \concept{Data Driven Floquet Analysis (DDFA)} \cite{revzen2011sicb, RevzenPhD09} consisting of estimating phase, then constructing affine models conditioned on phase.
Tytell \cite{Tytell-SICB2013} reported a DDFA approach based on harmonic balance.
Wang and Srinivasan \cite{Wang-Srinivasan-2012} proposed to construct a Floquet model using ``factored Poincar\'e maps'' and derive a phase estimate from this model.
Ankarali \textit{et. al} \cite{Ankarali-2014-htf} applied \concept{harmonic transfer functions} and achieved good results on a \concept{hybrid} \cite{Back_Guckenheimer_Myers_1993,simic2000towards,westervelt2003hybrid, ames2005homology, Goebel_Sanfelice_Teel_2009,lerman2016category,lerman2020networks, clark2021invariant, kvalheim2021conley} spring-mass hopper model.
Regardless of how they are obtained, linear models of the dynamics around the limit cycle can at best only represent the hyperplanes tangent to the isochrons at their intersection with the limit cycle.
By contrast, the method we propose here can construct nonlinear isochron approximations on suitable neighborhoods of the limit cycle, as we explain in \refSec{tof_from_uncertain}.

\newcommand{\pphi}{\varphi}
\newcommand{\pphic}{\pphi|_{_{\cycle}}}
We now begin our theoretical description of the object we call the \concept{\ToF}.
By redefining $\Xs$ to be the basin of attraction of $\cycle$, we henceforth assume that $\cycle$ is globally asymptotically stable.
Asymptotic phase can be viewed as a map $\AP:\Xs \to \cycle$ which is a retraction ($\AP|_{\cycle} = \textnormal{id}|_{\cycle}$) and a semiconjugacy  ($\forall t\in\Real:\AP \circ \FlwSym_{t} = \FlwSym_{t}\circ \AP$).
From this it follows that $\AP\in \Cont^k$ when $f\in \Cont^k$\footnote{
  In more detail: fix $x\in \Xs$, let $U\subsetneq \cycle$ be a contractible open neighborhood of $\AP(x)$, and define $\tau_x\in [0,\infty)$ to be the smallest nonnegative number such that $\AP\circ \Flw{-\tau_x}{x}=o(0)$.    Since the isochron $\AP^{-1}(o(0))$ is a $\Cont^{k\geq 1}$ manifold transverse to the vector field \cite{shGuc75, eldering2018global},  the implicit function theorem implies that there is a $\Cont^k$ function $\tau\colon \AP^{-1}(U)\to \R$ satisfying $\tau(x)=\tau_x$ and $\AP \circ \Flw{-\tau(y)}{y}=o(0)$ for all $y\in \AP^{-1}(U)$.
  Thus, $\AP|_{\AP^{-1}(U)} = o\circ \tau$ is $\Cont^k$.
  Since $x\in \Xs$ was arbitrary, it follows that $\AP\in \Cont^k$.
}.
Each $p \in \cycle$ represents the \concept{isochron} $\AP^{-1}(p)$, which is the set of initial conditions from which trajectories converge with the one starting at $p$\footnote{
  The geometry of isochrons can be surprisingly complicated, especially when the vector field $f$ has multiple time scales \citep{langfield2015forward}
}.
In the present context of oscillators, asymptotic phase is more commonly represented as a circle-valued (``phasor''-valued) map $\pphi\colon \Xs\to \Sph^1$; we explain the relationships between different representations of phase in \refSec{equiv}.

The \ToF\ appears naturally as a consequence of the existence of $\AP$.
Noting that $f$ is nowhere zero on the $1$-dimensional $\Cont^k$ manifold $\cycle$ and defining the angular frequency $\omega \coloneqq 2\pi/T$, it follows that there exists a unique $\Cont^{k-1}$ differential 1-form $\alpha$ on $\cycle$ satisfying $\langle \alpha, f \rangle = \omega$ identically on $\cycle$\footnote{In a Euclidean space, $\alpha$ can be expressed as $\alpha\defeq (f^\T f)^{-1}\omega  f^\T$}.
The \concept{\ToF\ } $\dT:\,\Xs \to \T^*\Xs$ is defined to be the pullback $\AP^*\alpha$ of $\alpha$, which means that $\langle\dT(x),v\rangle \defeq \langle\alpha(\AP(x)),\D\AP(x)\cdot v\rangle$ for any $v\in \T_x \Xs$.
Since $\AP\in \Cont^k$, it follows that $\dT\in \Cont^{k-1}$.
It is easy to see that the \ToF\ satisfies two properties (further discussed in \refSec{equiv}):  (1) $\langle \dT, f \rangle = \omega$ everywhere, and (2) $\dT$ is closed. 

By \concept{closed} we mean that each $x\in \Xs$ possesses an open neighborhood $U_x$ on which $\dT$ is the exterior derivative of a $\Cont^{k}$ function.
This definition permits us to discuss \emph{continuous} closed forms, needed for the case $f\in \Cont^1$; it reduces to the usual definition for $\Cont^1$ forms (cf. \cite{farber2004lyapunov}).

Somewhat less obvious is the fact that the \ToF\ is the \emph{unique} continuous closed $1$-form on $\Xs$ satisfying the preceding two properties; we show that this is the case in \refSec{equiv}.
In that section we also relate $\AP$, $\dT$, and circle-valued phases $\pphi\colon \Xs\to \Sph^1$.
By a \concept{circle-valued (asymptotic) phase} $\pphi\colon \Xs\to \Sph^1\subset \mathbb{C}$ we mean a continuous map satisfying $\pphi\circ \FlwSym_{t}=e^{i\omega t}\pphi$ for all $t\in \R$.
To summarize Theorem~\ref{th:phase-reps-relate} of \refSec{equiv}, some of these relationships are as follows: $\pphi \circ \AP = \pphi$, $\pphi$ is unique modulo rotations of $\Sph^1$, and $\dT = \pphi^*(d\theta)$ is the pullback of the standard angular form on $\Sph^1$ via any circle-valued phase.
Moreover, any choice of basepoint $x_0\in \Xs$ uniquely determines a circle-valued phase by integrating $\dT$ along continuous curves from $x_0$.

While we are interested in asymptotic phase as defined for deterministic oscillators, our primary goal is to compute asymptotic phase from real-world data which we assume may be subject to system and measurement noise.
In \refSec{tof_from_uncertain} we derive fairly general performance guarantees applicable to our algorithm and potentially others.
In that section, we define ``unobserved'' and ``observed'' empirical cost functions $J_N$ and $\eJ$ mapping $1$-forms to nonnegative numbers, where $N$ is the number of observed state-velocity data pairs.
Intuitively, we would like to minimize the former cost (for which $J_N(\dT)=0$), but only the latter cost is observable, so our algorithms attempts to minimize the latter.

Assuming that our algorithm outputs estimates $\dTe_N$ satisfying $\dTe_N \leq \dT + \epsilon$ for some $\epsilon \geq 0$, that $\Xs$ is an open subset of $\R^n$, and that state measurements belong to some fixed compact set $K\subset \Xs$, we establish in Theorem~\ref{th:cost_funcs_close} the following inequality holding with probability $1$: 
\begin{align*}
\limsup_{N\to\infty}& \frac{\sqrt{J_N(\dTe_N)}-2\sqrt{\epsilon}}{\|\dT\|_{\Cont^0(K)}+(1+\sqrt{2})\|\dTe_N\|_{\Cont^0(K)}} \stackrel{\textnormal{a.s.}}{\leq}2\sqrt{\kappa} .
\end{align*}
Here $\|\slot\|_{\Cont^0(K)}$ denotes the supremum norm.
Assuming $\epsilon = 0$ for simplicity, it follows in particular that, with $f$ fixed, the performance of the estimator with respect to $J_N$ is within $\bo(\kappa)$ of optimality in the limit of large data.
Moreover, the full statement of Theorem~\ref{th:cost_funcs_close} also includes explicit convergence rates (see Remark~\ref{rem:convergence-rate}).
See the remarks following Theorem~\ref{th:cost_funcs_close} for further discussion.

However, as discussed in \refSec{tof_from_uncertain}, bounding the performance with respect to $J_N$ as in Theorem~\ref{th:cost_funcs_close} does not say anything about closeness of the estimates $\dTe_N$ themselves to $\dT$.
Thus, even if $J_N(\dTe_N)$ is small, the estimated isochrons corresponding to $\dTe_N$ need not resemble the true isochrons.
This motivates Theorem~\ref{th:estimated_isochrons_approach_true_isochrons} which shows, roughly speaking, that under some additional assumptions the estimated isochrons converge $\Cont^1$-uniformly to the true isochrons on forward invariant open subsets of $K$.
Moreover, the full statement of Theorem~\ref{th:phase-reps-relate} also includes explicit convergence rates.


\Heading{Relationship of the \ToF\ to other representations of asymptotic phase}\label{sec:equiv}
In this section we explain the relationship (mentioned in \refSec{mathbg}) between the asymptotic phase map $\AP\colon \Xs\to \cycle$, circle-valued phase maps $\pphi\colon \Xs\to \Sph^1$, and the \ToF\ $\dT$.
We continue under the assumptions and notation of \refSec{mathbg}; in particular, the state space $\Xs$ is also the basin of $\cycle$.

\begin{lemma}\label{lem:t1f-properties}
  Consider an oscillator generated by the $\Cont^{1}$ vector field $f\colon \Xs\to \T\Xs$.
  The \ToF\ $\dT:\,\Xs\to\T^*\Xs$ is closed and satisfies $\langle \dT, f \rangle \equiv \omega$.
\end{lemma}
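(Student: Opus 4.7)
The plan is to verify the two claimed properties separately: the pointwise identity will follow from the fact that $\AP$ is a semiconjugacy, while closedness will follow from the one-dimensionality of $\cycle$ together with naturality of the exterior derivative.

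For the pointwise identity $\langle \dT, f\rangle \equiv \omega$, I would first differentiate the semiconjugacy relation $\AP \circ \FlwSym_t = \FlwSym_t \circ \AP$ at $t=0$. By the chain rule this yields the intertwining formula $\D \AP(x)\cdot f(x) = f(\AP(x))$ for every $x\in \Xs$. Then, using the definition $\dT(x) = (\D \AP(x))^* \alpha(\AP(x))$, one computes
\begin{equation*}
\langle \dT(x), f(x) \rangle = \langle \alpha(\AP(x)),\, \D \AP(x)\cdot f(x) \rangle = \langle \alpha(\AP(x)),\, f(\AP(x)) \rangle = \omega,
\end{equation*}
where the final equality is the defining property of $\alpha$ on $\cycle$. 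This handles the first claim uniformly on all of $\Xs$.

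For closedness, the idea is to show that $\dT$ is locally exact around every point, which is the definition given in Section~\ref{sec:mathbg} for continuous $1$-forms. Because $\cycle$ is a $\Cont^k$ embedded $1$-manifold, every $p\in \cycle$ sits in a small open arc $V\subset \cycle$ on which $\alpha|_V$ is exact: integrating along the arc gives a primitive $h\in \Cont^k(V)$ with $\alpha|_V = \Deriv h$. Given $x\in \Xs$, I would choose such a $V$ containing $\AP(x)$ and set $U_x \defeq \AP^{-1}(V)$, which is open since $\AP$ is continuous. Naturality of the exterior derivative then gives
\begin{equation*}
\dT|_{U_x} = \AP^*(\Deriv h) = \Deriv (h\circ \AP),
\end{equation*}
and $h\circ \AP \in \Cont^k(U_x)$ since $\AP\in \Cont^k$. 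This exhibits $\dT$ as locally exact on a neighborhood of an arbitrary point, establishing closedness in the required sense.

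The main obstacle is really only one of bookkeeping rather than of content: since the hypothesis is merely $f\in \Cont^1$, the form $\alpha$ need only be $\Cont^0$, so one cannot naively invoke $\Deriv \circ \Deriv = 0$ on $\dT$ itself and instead must work through local exactness. The one-dimensionality of $\cycle$ (which makes every continuous $1$-form on it locally exact by plain antidifferentiation) is what makes this strategy go through without any extra regularity assumption beyond $\Cont^1$.
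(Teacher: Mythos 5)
Your proof is correct and takes essentially the same route as the paper's: the pointwise identity $\langle \dT, f\rangle \equiv \omega$ comes from differentiating the semiconjugacy to obtain $\D\AP \circ f = f\circ \AP$ and unwinding the pullback, and closedness comes from pulling back the closed form $\alpha$ along $\AP$. The only difference is that where the paper simply remarks that pullbacks commute with $\dform{}$, you unpack this at the level of local primitives on arcs of $\cycle$ to accommodate the case where $\alpha$ is merely continuous---a worthwhile bit of extra care given the paper's local-exactness definition of closedness for $\Cont^0$ forms.
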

\begin{proof}
  Since $\AP\colon \Xs\to \cycle$ is a semiconjugacy, it follows that $\D\AP \circ f = f \circ \AP$.
  Since $\dT \coloneqq \AP^*\alpha$ (\refSec{mathbg}), it follows that $\langle \dT, f\rangle \equiv \langle P^*\alpha, f\rangle \equiv \langle \alpha, \D P \circ f\rangle \equiv \langle \alpha, f \circ P\rangle \equiv \omega$.
  Since $\alpha$ is closed and pullbacks commute with $\dform{}$, $\dT = \AP^*\alpha$ is closed.
\end{proof}

Given an open neighborhood $U\subset \Xs$ of $\cycle$ which is forward invariant for the flow of $f$, we extend the definition of circle-valued phase (\refSec{mathbg}) to include continuous maps  $\pphi\colon U\to \Sph^1\subset \mathbb{C}$ satisfying $\pphi\circ \FlwSym_{t} = e^{i\omega t}\pphi$ for all $t\geq 0$.
(We use this generality in the proof of Theorem~\ref{th:estimated_isochrons_approach_true_isochrons}.)
For the following lemmas and theorem, $d\theta$ denotes the standard angular form on the circle $\Sph^1$ and $i=\sqrt{-1}$.

\begin{lemma}\label{lem:unique}
 Consider an oscillator generated by the $\Cont^{1}$ vector field $f\colon \Xs\to \T\Xs$.
 Let $U$ be a forward invariant open neighborhood of $\cycle$.
 Given any $x_0\in U$, there exists a unique circle-valued phase $\pphi\colon U\to \Sph^1$ satisfying $\pphi(x_0)=1$.
 Moreover, $\pphi\in \Cont^{k\geq 1}$ if $f\in \Cont^k$.
\end{lemma}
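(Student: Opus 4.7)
The natural strategy is to construct $\pphi$ by composing the asymptotic phase retraction $\AP\colon U\to \cycle$ (restricted from $\Xs$) with a canonical identification of $\cycle$ with $\Sph^1$, then normalize by a constant rotation to arrange $\pphi(x_0)=1$. For uniqueness, the standard move is to note that the ``ratio'' of two such circle-valued phases is flow-invariant, and then exploit asymptotic stability of $\cycle$ to propagate a value from $x_0$ to all of $U$.

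\textbf{Existence.}
First I would fix any basepoint $p_0\in \cycle$ and set $o(t)\defeq \Flw{t}{p_0}$. Because $f$ is nowhere zero on $\cycle$, the map $o\colon \R/T\Z\to \cycle$ is a $\Cont^k$ immersion of a compact $1$-manifold into an embedded compact $1$-manifold, hence a $\Cont^k$ diffeomorphism. This lets me define $\psi\colon \cycle\to \Sph^1$ unambiguously by $\psi(o(t))\defeq e^{i\omega t}$, and $\psi$ is a $\Cont^k$ diffeomorphism intertwining the flow on $\cycle$ with rotation at rate $\omega$. Setting $\tilde\pphi\defeq \psi\circ \AP\colon U\to \Sph^1$ and using the semiconjugacy $\AP\circ \FlwSym_t=\FlwSym_t\circ \AP$ together with $\psi\circ \FlwSym_t=e^{i\omega t}\psi$ gives $\tilde\pphi\circ \FlwSym_t = e^{i\omega t}\tilde\pphi$. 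Finally I set $\pphi\defeq \overline{\tilde\pphi(x_0)}\,\tilde\pphi$, which still satisfies the intertwining identity and additionally $\pphi(x_0)=1$. Smoothness is immediate: the footnote in Section~\ref{sec:mathbg} shows $\AP\in \Cont^k$, and $\psi$ is $\Cont^k$, so $\pphi\in \Cont^k$.

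\textbf{Uniqueness.}
Given two candidate phases $\pphi_1,\pphi_2$, form $g\defeq \pphi_1\overline{\pphi_2}\colon U\to \Sph^1$. Then $g$ is continuous, $g(x_0)=1$, and the intertwining property for $\pphi_1,\pphi_2$ cancels the $e^{i\omega t}$ factors to give $g\circ \FlwSym_t = g$ for all $t\geq 0$. Restricted to $\cycle$, $g$ is constant along the single dense (in fact, all of $\cycle$) forward orbit $\{\FlwSym_t(p_0)\}_{t\geq 0}$, so $g|_\cycle\equiv c$ for some $c\in \Sph^1$. To extend this to $U$, pick $x\in U$; since $U\subset \Xs$ is the basin of $\cycle$, $\dist{\FlwSym_t(x)}{\cycle}\to 0$. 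A standard compactness/continuity argument---covering $\cycle$ by finitely many neighborhoods on which $|g-c|<\epsilon$---yields $g(x)=c$ for all $x\in U$. Since $g(x_0)=1$, $c=1$, hence $\pphi_1\equiv \pphi_2$.

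\textbf{Main obstacle.}
The only nonroutine step is the final propagation argument in the uniqueness proof: deducing from $g\circ \FlwSym_t=g$ and $g|_\cycle\equiv c$ that $g\equiv c$ on all of $U$. This requires compactness of $\cycle$, continuity of $g$, and forward asymptoticity of every orbit in $U$ to $\cycle$---so it is really where the hypothesis that $U$ lies in the basin does work. Everything else (construction of $\psi$ as a $\Cont^k$ diffeomorphism, invocation of the semiconjugacy property of $\AP$, and the normalization step) is bookkeeping.
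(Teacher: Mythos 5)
Your proof is correct and follows essentially the same route as the paper's: existence by composing a circle-valued phase defined on $\cycle$ with the retraction $\AP$ (using the semiconjugacy and the $\Cont^k$ regularity of $\AP$), and uniqueness by observing that the ratio of two candidate phases is flow-invariant, hence constant on $\cycle$, and then propagated to all of $U$ by continuity and convergence of trajectories to $\cycle$. The only difference is that you spell out the intermediate diffeomorphism $\psi\colon\cycle\to\Sph^1$ and the final compactness argument in more detail than the paper does.
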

\begin{proof}
Existence follows by first defining $\pphi|_\cycle\colon \cycle \to \Sph^1$ by $\pphi|_\cycle \circ \AP(x_0) =1$ and $\pphi|_\cycle \circ \FlwSym_{t} = e^{i\omega t}\pphi|_\cycle$, then defining $\pphi\coloneqq \pphi|_\cycle \circ \AP$.
(As explained in \refSec{mathbg}, $\AP\in \Cont^k$ when $f\in \Cont^k$.)
To show uniqueness, let $\pphi_1$ and $\pphi_2$ be two such circle-valued phases.
The ratio $\pphi_1/\pphi_2$ is constant along trajectories of $f$, so $\pphi_1/\pphi_2$ is constant on $\cycle$.
Since all trajectories converge to $\cycle$, it follows from continuity that $\pphi_1/\pphi_2\equiv \pphi_1(x_0)/\pphi_2(x_0)= 1$ on $U$, so $\pphi_1=\pphi_2$ as desired.
\end{proof}

\begin{lemma}\label{lem:t1f-to-circle-phase}
Consider an oscillator generated by the $\Cont^{1}$ vector field $f\colon \Xs\to \T\Xs$.
Let $U$ be a forward invariant open neighborhood of $\cycle$, and let $\beta$ be any $\Cont^{k\geq 0}$ closed $1$-form on $U$ satisfying $\langle \beta, f\rangle \equiv \omega$.
Then for any $x_0\in U$, there is a unique $\Cont^{k+1}$ circle-valued phase $\pphi\colon U\to \Sph^1$ satisfying $\pphi(x_0)=1$ and $\pphi^*(d\theta)=\beta$. 
\end{lemma}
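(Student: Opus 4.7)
The strategy is to construct $\pphi$ by integrating $\beta$ along continuous curves from $x_0$, and the main task will be showing this is well-defined modulo $2\pi$. First I would let $\tilde{U}\to U$ be the universal cover, pull $\beta$ back to $\tilde\beta$, observe that $\tilde\beta$ is exact (closed on a simply connected space; for $k=0$ I would invoke a smoothing result or a direct argument using contractibility of small open sets), and fix a primitive $\tilde\psi$ with $\tilde\psi(\tilde x_0)=0$. Define a candidate $\pphi\colon U\to \Sph^1$ by $\pphi\defeq e^{i\tilde\psi}$ composed with the covering projection. Equivalently and more concretely, $\pphi(x)\defeq \exp\!\left(i\int_\gamma \beta\right)$ for any continuous curve $\gamma$ in $U$ from $x_0$ to $x$.

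The heart of the proof is the \emph{period condition}: for every continuous loop $\Gamma\colon \Sph^1\to U$, $\int_\Gamma \beta \in 2\pi\Integer$. I would establish this in three steps. (i) For the specific loop $o|_{[0,T]}$, the hypothesis $\langle\beta,f\rangle\equiv \omega$ yields $\int_o \beta = \int_0^T \omega\,dt = 2\pi$ directly. (ii) Given an arbitrary loop $\Gamma$, use the forward flow to construct a free homotopy $H(s,t)\defeq \Flw{t}{\Gamma(s)}$ inside $U$ (forward invariance of $U$ is used here). Since $\beta$ is closed, free homotopies preserve its periods, so $\int_\Gamma \beta = \int_{H(\slot,t)}\beta$ for all $t\geq 0$. (iii) For $t$ large enough, $H(\slot,t)$ lies inside an arbitrarily small tubular neighborhood of $\cycle$, on which the asymptotic phase retraction $\AP$ is defined and is a deformation retraction onto $\cycle$. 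Thus $H(\slot,t)$ is freely homotopic in $U$ to the loop $\AP\circ H(\slot,t)$ in $\cycle$, which is homotopic in $\cycle\cong \Sph^1$ to $o$ traversed $m$ times for some $m\in\Integer$. Applying (i) and homotopy invariance again gives $\int_\Gamma \beta = 2\pi m$.

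With well-definedness in hand, the remaining verifications are routine. The normalization $\pphi(x_0)=1$ holds by construction (take $\gamma$ constant). The equivariance $\pphi\circ \FlwSym_t = e^{i\omega t}\pphi$ follows by concatenating $\gamma$ with the flow segment $s\mapsto \Flw{s}{x}$, $s\in[0,t]$, whose $\beta$-integral is $\int_0^t \langle\beta,f\rangle\,ds = \omega t$. Smoothness of class $\Cont^{k+1}$ comes from the local primitives: around each point $\beta$ admits a local $\Cont^{k+1}$ primitive $\psi$ (by the Poincar\'e lemma in $\Cont^0$-closed form, after smoothing if needed, or directly since $k\geq 0$ and local integration gains one degree), and $\pphi = e^{i\psi}$ locally. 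The identity $\pphi^*(d\theta) = \beta$ is then immediate from $d\psi = \beta$ on each such chart. Uniqueness follows directly from Lemma~\ref{lem:unique}.

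\textbf{Main obstacle.} The nontrivial step is the period computation, specifically justifying that arbitrary loops in $U$ are freely homotopic (inside $U$) to loops on $\cycle$. Forward invariance of $U$ is essential so that $H(s,t)$ stays in $U$; the retraction step requires the existence of a tubular neighborhood of $\cycle$ in $U$ on which $\AP$ provides a deformation retraction, which one gets from normal hyperbolicity of $\cycle$. A secondary subtlety is the $\Cont^0$ case: the Poincar\'e lemma and Stokes-type homotopy invariance need the smoothing of currents machinery cited in the paper's earlier footnote, or one can replace $\beta$ by a smooth representative in the same closed cohomology class on small contractible neighborhoods and pass to the limit.
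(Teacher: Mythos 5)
Your proposal is correct and follows essentially the same route as the paper's proof: define $\pphi$ by exponentiating line integrals of $\beta$ from $x_0$, verify the periods lie in $2\pi\Integer$ by using forward invariance and the flow to deformation-retract $U$ onto $\cycle$ (where $\langle\beta,f\rangle\equiv\omega$ gives period $2\pi$), and deduce uniqueness from Lemma~\ref{lem:unique}. You have merely spelled out in more detail the homotopy and retraction steps that the paper relegates to a footnote and to the citation of Pajitnov.
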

\begin{proof}
  Since $U$ is forward invariant, it follows that $\cycle$ is a deformation retraction of $U$\footnote{In fact, a \textit{strong} deformation retract.
    As an embedded submanifold of $U$, the limit cycle is a strong deformation retract of a \concept{tubular neighborhood} (\cite{lee2013smooth} Chapter 6) containing it.
    Using the flow, one may construct a strong deformation retract of $U$ onto a tubular neighborhood.
    Following this homotopy with the strong deformation retraction of the tubular neighborhood onto the limit cycle then gives the desired strong deformation retraction.
  }. 
  Since $\beta$ is closed and satisfies $\langle \beta, f\rangle \equiv \omega$, it follows that the line integral of $\beta$ along any continuous closed curve in $U$ is an integer multiple of $2\pi$.
  Thus, exponentiating $i$ times the line integrals of $\beta$ along arbitrary continuous paths from $x_0$ defines a $\Cont^{k+1}$ map $\pphi\colon U\to \Sph^1$ satisfying  $\beta= \pphi^*(d\theta)$
  (cf. the proof of \cite[Lem.~1.17]{pajitnov2006circle}). 
  Moreover, the condition $\langle \beta, f\rangle \equiv \omega$ implies that $\pphi$ is a circle-valued phase. 
  Uniqueness of $\pphi$ follows from Lemma~\ref{lem:unique}.
\end{proof}

The following result summarizes some relationships between different representations of asymptotic phase.

\begin{theorem}\label{th:phase-reps-relate}
  Consider an oscillator generated by the $\Cont^{1}$ vector field $f\colon \Xs\to \T\Xs$, with asymptotic phase map denoted by $\AP\colon \Xs\to \cycle$.
  \begin{enumerate}
  \item\label{item:t1f-th-1}   The \ToF\ $\dT$ is the unique continuous closed $1$-form on $\Xs$ satisfying $$\langle \dT, f\rangle \equiv \omega.$$
  \item\label{item:t1f-th-2}  If $\pphi$ is any circle-valued phase, $\pphi\circ \AP = \pphi$ and  $$\dT = \pphi^*(d\theta) = -i(\dform \pphi)/\pphi.$$  
  \item\label{item:t1f-th-3} 
    Conversely, a choice of $x_0\in \Xs$ uniquely determines a circle-valued phase $\pphi$ satisfying $\pphi(x_0) = 1$ through the formula
      \begin{equation}
    \varphi(x) = \exp\left(i\int_{\gamma} \dT \right),
    \end{equation}
    where $\gamma$ is any $\Cont^0$ path joining $x_0$ to $x$.
  \end{enumerate}
\end{theorem}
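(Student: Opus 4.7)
The proof plan is to handle the three items essentially in reverse logical order, leveraging the two lemmas already established, so that each item reduces to a short uniqueness argument or a routine computation.

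For item~\ref{item:t1f-th-1}, existence of a continuous closed $1$-form on $\Xs$ satisfying $\langle \dT, f\rangle \equiv \omega$ is immediate from Lemma~\ref{lem:t1f-properties}. For uniqueness, I would let $\beta$ be any other continuous closed $1$-form satisfying $\langle \beta, f\rangle \equiv \omega$, fix an arbitrary $x_0\in \Xs$, and apply Lemma~\ref{lem:t1f-to-circle-phase} twice with $U=\Xs$: once to $\beta$ to obtain a circle-valued phase $\pphi_\beta$ with $\pphi_\beta(x_0)=1$ and $\pphi_\beta^*(d\theta)=\beta$, and once to $\dT$ to obtain $\pphi_{\dT}$ with $\pphi_{\dT}(x_0)=1$ and $\pphi_{\dT}^*(d\theta)=\dT$. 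By Lemma~\ref{lem:unique} the two circle-valued phases coincide, hence $\beta=\dT$.

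For item~\ref{item:t1f-th-2}, I would first verify $\pphi\circ \AP = \pphi$: by the semiconjugacy $\AP\circ \FlwSym_t = \FlwSym_t \circ \AP$ and the definition of a circle-valued phase, $\pphi\circ \AP$ is also a circle-valued phase. Pick any $x_0 \in \cycle$, so that $\AP(x_0)=x_0$ and hence $(\pphi\circ \AP)(x_0)=\pphi(x_0)$; then, after normalizing by the common value $\pphi(x_0)$, Lemma~\ref{lem:unique} yields $\pphi\circ \AP = \pphi$. To get $\dT = \pphi^*(d\theta)$, differentiate the defining identity $\pphi\circ \FlwSym_t = e^{i\omega t}\pphi$ at $t=0$ to conclude $\D \pphi \cdot f = i\omega \pphi$, and therefore $\langle \pphi^*(d\theta), f\rangle = \langle d\theta, \D\pphi \cdot f\rangle \equiv \omega$; since $\pphi^*(d\theta)$ is also closed as the pullback of a closed form, item~\ref{item:t1f-th-1} forces $\pphi^*(d\theta)=\dT$. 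Finally, $\pphi^*(d\theta) = -i(\dform\pphi)/\pphi$ is a coordinate identity: writing $\pphi = e^{i\theta\circ\pphi}$ locally gives $\dform\pphi = i\pphi\cdot\pphi^*(d\theta)$, and dividing by $\pphi$ yields the claim.

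For item~\ref{item:t1f-th-3}, I would invoke Lemma~\ref{lem:t1f-to-circle-phase} with $U=\Xs$ and $\beta=\dT$; the lemma directly produces the unique circle-valued phase $\pphi$ with $\pphi(x_0)=1$, constructed by exponentiating $i$ times the line integral of $\dT$ along a continuous curve from $x_0$ to $x$, which is exactly the stated formula. The quoted proof of the lemma already verifies that the integral along any continuous closed loop lies in $2\pi\mathbb{Z}$ (because $\cycle$ is a deformation retract of $U$ and $\int_\cycle \dT = 2\pi$), so the expression is well-defined independently of the path $\gamma$.

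The main obstacle is mildly subtle rather than computational: it is making sure the uniqueness assertion in item~\ref{item:t1f-th-1} genuinely covers merely \emph{continuous} closed $1$-forms, not just $\Cont^1$ ones, because $\dT$ itself is only guaranteed to be $\Cont^{k-1}$ and our cohomological machinery must operate at that regularity. This is handled by the definition of \textbf{closed} given right after Lemma~\ref{lem:t1f-properties} (locally an exterior derivative of a $\Cont^k$ function) together with the regularity claim in Lemma~\ref{lem:t1f-to-circle-phase}, which together allow the path-integral construction and the comparison with Lemma~\ref{lem:unique} to go through without any smoothing step.
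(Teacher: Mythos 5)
Your proposal is correct and follows essentially the same route as the paper: both rest on Lemma~\ref{lem:t1f-properties}, Lemma~\ref{lem:unique}, and Lemma~\ref{lem:t1f-to-circle-phase}, with item~\ref{item:t1f-th-3} read off from Lemma~\ref{lem:t1f-to-circle-phase} at $U=\Xs$, the identity $\pphi\circ\AP=\pphi$ obtained from uniqueness of circle-valued phases agreeing on $\cycle$, and uniqueness in item~\ref{item:t1f-th-1} reduced to uniqueness of circle-valued phases. The only (immaterial) difference is that you normalize at a common basepoint and re-derive $\langle\pphi^*(d\theta),f\rangle\equiv\omega$ by differentiating the flow identity, where the paper instead invokes rotation-invariance of $d\theta$; both are valid given the $\Cont^{k\geq 1}$ regularity supplied by Lemma~\ref{lem:unique}.
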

\begin{proof}
Claim~\ref{item:t1f-th-3} is immediate from Lemma~\ref{lem:t1f-to-circle-phase} and its proof (taking $U=\Xs$).
The first statement of Claim~\ref{item:t1f-th-2} follows since $\pphi\circ \AP$ and $\pphi$ are both circle-valued phases agreeing on $\cycle$ (since $\AP|_\cycle = \textnormal{id}|_{\cycle}$), so they coincide by Lemma~\ref{lem:unique}.

Since any two circle-valued phases differ by a rotation of $\Sph^1$ (Lemma~\ref{lem:unique}), and since $d\theta$ is rotation invariant, Lemma~\ref{lem:t1f-to-circle-phase} implies Claim~\ref{item:t1f-th-1} and the equality $\dT = \pphi^*(d\theta)$ in Claim~\ref{item:t1f-th-2}.
The remaining equality in Claim~\ref{item:t1f-th-2} follows from the straightforward verification that $\pphi^*(d\theta) = -i(\dform \pphi)/\pphi$ for any $\Cont^1$ circle-valued map $\pphi$, where $\dform\pphi$ is viewed as a complex-valued differential $1$-form \citep[p.~192]{guillemin1974differential}.

\end{proof}

\Heading{The \ToF\ estimated from uncertain systems}
\label{sec:tof_from_uncertain}
In this section we prove two theorems relating the performance of the true \ToF\ to that of estimates computed in the presence of noise, to which data from real-world systems and measurements are subjected.
These are extensions of corresponding results in \citet[Sec.~3.4]{KvalheimPhD18}.

Throughout the remainder of this section, we assume for simplicity that the state space $\Xs$ (also assumed to be the basin of attraction of the limit cycle) is an open subset of $\Real^n$.
We also assume for the remainder of this section that data takes values in a fixed compact set $K\subset \Xs$.

In what follows we let $\|\slot\|$ be the Euclidean norm, and
we denote induced operator norms by the same symbol.
Given a continuous function $\beta$ on a set $A\subset \Xs$, we define
$$\|\beta\|_{\Cont^0(A)}\coloneqq \sup_{x\in A}\|\beta(x)\|.$$
We denote by $\Omega^1_0(A)$ the set of continuous $1$-forms over $A$, i.e., continuous sections of $\T^* \Xs|_A$.

\subHeading{Assumptions about Data and Noise}
\label{sec:assump-data-noise}
We assume that we have a finite collection of pairs $(x_{i},\dot x_{i})_{i=1}^N \subset K \times \R^n$, with $\dot x_{i}$ of the form $\dot x_{i} = f(x_{i}) + \eta_{i}$.
We consider the $x_i\in K$ and $\eta_i\in \R^n$ to be random variables, and we assume that there is a constant $\kappa \geq 0$ such that
\begin{equation}\label{eqn:eta_assumptions}
\forall i\colon \E(\|\eta_i\|^2) \leq \kappa.
\end{equation}
We also assume that the strong law of large numbers applies to yield the following equality with probability $1$; quite general conditions ensuring this can be found in \cite{doob1953stochastic, andrews1987consistency,potscher1989uniform}.
\begin{align}\label{eqn:ms-limsup}
\lim_{N\to\infty}\frac{1}{N}\sum_{i=1}^N\|\eta_i\|^2 - \E(\|\eta_i\|^2) &\stackrel{\textnormal{a.s.}}{=} 0
\end{align}
We will impose additional conditions involving the $x_i$ for Theorem~\ref{th:estimated_isochrons_approach_true_isochrons}, but not for Theorem~\ref{th:cost_funcs_close}.
\begin{Rem}\label{rem:noise-assump-generality}
  This setup is sufficiently general that the $\eta_i$  could arise from measurement noise, or system noise, or both.
  In \cite[Sec.~3.4.2]{KvalheimPhD18} it is argued in detail that this formulation applies to data from It\^{o} SDEs.
  It also applies to data coming from Stratonovich SDEs, but there is a slight twist: to get sharper results, one should replace $f$ by a modified vector field related to It\^{o}'s formula \cite[Sec.~3.4.2.2]{KvalheimPhD18}.
\end{Rem}

\subHeading{Performance of the estimated Temporal 1-Form}
\label{sec:tof_from_uncertain_core_result}

Denote by $\dT$ the unique true \ToF\ on $\Xs$ corresponding to the vector field $f\in \Cont^1$.
We define the ``unobserved'' cost function $J_N:\Omega_0^1(K)\to [0,\infty)$ which has $\dT\in \Cont^0$ as its (nonunique) global minimizer, with minimum $J_N(\dT)=0$, by
\begin{align}
J_N(\beta) &:= \frac{1}{N} \sum_{i=1}^{N} \left(\left\langle \beta(x_{i}), f(x_{i})\right\rangle - \omega\right)^2.
\end{align}
We also define the following ``observed'' cost function $\eJ:\Omega_0^1(K)\to[0,\infty)$ by
\begin{align}
\label{eqn:def_cost_J_N_noisy}
\eJ(\beta) &:= \frac{1}{N} \sum_{i=1}^{N} \left(\left\langle \beta(x_{i}), f(x_{i})+\eta_{i}\right\rangle - \omega\right)^2.
\end{align}
Fix $\epsilon \geq 0$.
For each $N$ we fix \emph{any} $\dTe_N \in \Omega_0^1(K)$ with the property that
\begin{equation}\label{eqn:assumption-phi-hat}
\eJ(\dTe_N) \leq \eJ(\dT) + \epsilon.
\end{equation}
We think of $\dTe_N$ as being the output of our algorithm to compute the \ToF; our algorithm effectively attempts to minimize $\eJ$.
We have the following performance bound on our algorithm (assuming its output satisfies \eqref{eqn:assumption-phi-hat}) with respect to minimizing $J_N$.

\newcommand{\Sqrt}[1]{\left[{#1}\right]^{_{{1}\over {2}}}}

\begin{theorem}\label{th:cost_funcs_close}Consider an oscillator generated by the $\Cont^{1}$ vector field $f\colon \Xs\to \T\Xs$.
Then
  \begin{align*}
 & \frac{1}{4}\left(\frac{\sqrt{J_N(\dTe_N)}-2\sqrt{\epsilon}}{\|\dT\|_{\Cont^0(K)}+(1+\sqrt{2})\|\dTe_N\|_{\Cont^0(K)}}\right)^2  \leq \frac{1}{N}\sum_{i=1}^N\|\eta_i\|^2.
  \end{align*}
Thus, under the assumptions of the present section,
  \begin{align*}
 \limsup_{N\to\infty}& \frac{\sqrt{J_N(\dTe_N)}-2\sqrt{\epsilon}}{\|\dT\|_{\Cont^0(K)}+(1+\sqrt{2})\|\dTe_N\|_{\Cont^0(K)}} \stackrel{\textnormal{a.s.}}{\leq}2\sqrt{\kappa} .
  \end{align*}
In particular, if $\|\dT\|_{\Cont^0(K)}, \|\dTe_N\|_{\Cont^0(K)} \leq  M_0$, then
  \begin{align*}
   \limsup_{N\to\infty}\sqrt{J_N(\dTe_N)} \stackrel{\textnormal{a.s.}}{\leq} (4+2\sqrt{2})M_0 \sqrt{\kappa} + 2\sqrt{\epsilon}.
  \end{align*}
\end{theorem}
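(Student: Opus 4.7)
The plan is to bound the unobserved cost $J_N(\dTe_N)$ by the observed cost $\eJ(\dTe_N)$ through a triangle inequality in the normalized empirical $\ell^2_N$ norm, then exploit the defining assumption \eqref{eqn:assumption-phi-hat} together with the identity $\langle \dT,f\rangle\equiv \omega$ furnished by Lemma~\ref{lem:t1f-properties} to rewrite everything in terms of inner products with the noise vectors $\eta_i$. All that then remains is to control those pairings via Cauchy--Schwarz and to pass to the limit using the data-noise hypotheses \eqref{eqn:eta_assumptions}--\eqref{eqn:ms-limsup}.

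Concretely, set $S_N\coloneqq\bigl(\tfrac{1}{N}\sum_{i=1}^{N}\|\eta_i\|^2\bigr)^{1/2}$. Because $\langle \dT(x_i),f(x_i)\rangle=\omega$, the cross-term in $\eJ(\dT)$ drops out and we obtain $\eJ(\dT)=\tfrac{1}{N}\sum_i\langle \dT(x_i),\eta_i\rangle^2\leq \|\dT\|_{\Cont^0(K)}^2 S_N^2$ by Cauchy--Schwarz. Combined with \eqref{eqn:assumption-phi-hat} and subadditivity of $\sqrt{\slot}$, this gives $\sqrt{\eJ(\dTe_N)}\leq \|\dT\|_{\Cont^0(K)} S_N+\sqrt{\epsilon}$. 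Next, abbreviate $u_i\coloneqq\langle \dTe_N(x_i),f(x_i)\rangle-\omega$ and $v_i\coloneqq\langle \dTe_N(x_i),\eta_i\rangle$, so that $J_N(\dTe_N)=\|u\|^2$ and $\eJ(\dTe_N)=\|u+v\|^2$ in the normalized $\ell^2_N$ norm. The triangle inequality $\|u\|\leq\|u+v\|+\|v\|$ paired with a second Cauchy--Schwarz estimate $\|v\|\leq\|\dTe_N\|_{\Cont^0(K)} S_N$ yields
\begin{equation*}
\sqrt{J_N(\dTe_N)}\;\leq\;\bigl(\|\dT\|_{\Cont^0(K)}+\|\dTe_N\|_{\Cont^0(K)}\bigr)\,S_N+\sqrt{\epsilon}.
\end{equation*}
Squaring after trivial rearrangement already implies the first (deterministic) inequality of the theorem; in fact the constants here are strictly sharper than the stated $(1+\sqrt{2})$ and $2\sqrt{\epsilon}$, so the stated bound follows a fortiori.

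For the almost-sure part, the hypotheses $\E(\|\eta_i\|^2)\leq\kappa$ and \eqref{eqn:ms-limsup} immediately give $\limsup_N S_N^2\leq\kappa$ almost surely, hence $\limsup_N S_N\leq\sqrt{\kappa}$ almost surely. Substituting into the deterministic bound, and using the elementary fact $\limsup_N A_N\leq\sqrt{\limsup_N A_N^2}$ (valid because $A_N\leq|A_N|$) applied to the ratio appearing in the statement, converts the squared bound into the claimed linear asymptotic bound. The uniform version under $\|\dT\|_{\Cont^0(K)},\|\dTe_N\|_{\Cont^0(K)}\leq M_0$ is then a direct substitution.

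I expect no conceptual obstacle: every step is Cauchy--Schwarz, a triangle inequality, or the strong law of large numbers. The only bookkeeping point to watch is that $\sqrt{J_N(\dTe_N)}-2\sqrt{\epsilon}$ may be negative for some $N$, so the passage from the squared bound to the linear $\limsup$ bound must use $\limsup A_N\leq\sqrt{\limsup A_N^2}$ rather than a direct square-root; in the regime where the numerator is negative the claim holds trivially, since the right-hand side $2\sqrt{\kappa}$ is non-negative.
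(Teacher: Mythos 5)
Your proof is correct, and it reaches the stated bounds with room to spare. In substance it uses the same ingredients as the paper's proof---the identity $\eJ(\dT)=\frac{1}{N}\sum_i\langle\dT(x_i),\eta_i\rangle^2$ coming from $\langle\dT,f\rangle\equiv\omega$, the near-minimizer hypothesis \eqref{eqn:assumption-phi-hat}, Cauchy--Schwarz against $S_N$, and the law of large numbers---but the execution differs at one point. The paper expands $\eJ(\beta)$ into $J_N(\beta)$ plus noise terms, bounds the cross term by Cauchy--Schwarz, and ends up with an inequality quadratic in $T_N=\sqrt{J_N(\dTe_N)}$ that it resolves with the quadratic formula; this is where the constants $(1+\sqrt{2})$, $2\sqrt{\epsilon}$, and $2S_N$ enter. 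You instead observe that $J_N(\dTe_N)=\|u\|^2$ and $\eJ(\dTe_N)=\|u+v\|^2$ in the normalized empirical $\ell^2$ norm and apply the triangle inequality $\|u\|\leq\|u+v\|+\|v\|$ directly, which is the same estimate with the square already completed; this avoids the quadratic formula entirely and yields the strictly sharper deterministic bound $\sqrt{J_N(\dTe_N)}\leq(\|\dT\|_{\Cont^0(K)}+\|\dTe_N\|_{\Cont^0(K)})S_N+\sqrt{\epsilon}$, from which the theorem's three displayed inequalities follow a fortiori (e.g.\ $\limsup_N\sqrt{J_N(\dTe_N)}\leq 2M_0\sqrt{\kappa}+\sqrt{\epsilon}$ rather than $(4+2\sqrt{2})M_0\sqrt{\kappa}+2\sqrt{\epsilon}$). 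Your flagged caveat about the sign of $\sqrt{J_N(\dTe_N)}-2\sqrt{\epsilon}$ is well taken: passing from a linear bound $A_N\leq B_N$ with $B_N\geq 0$ to the squared form requires $A_N\geq 0$ (or interpreting the square as acting on the positive part), an issue your proof shares with, and does not worsen relative to, the paper's own final step.
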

We make several remarks before giving the proof.
\begin{Rem}
The assumptions \eqref{eqn:eta_assumptions} and \eqref{eqn:ms-limsup} are not needed for the first inequality of the theorem.
\end{Rem}

\begin{Rem}[Convergence rates]\label{rem:convergence-rate}
The first inequality in the theorem shows that the left side converges to $[0,\kappa]$ at least as fast as the rate of convergence in the law of large numbers \eqref{eqn:ms-limsup}.
Similarly, with respect to the \emph{weak} law of large numbers, the same inequality implies that the rate of convergence in probability of the left side are at least as fast as the corresponding rates of convergence in probability for the empirical second moments of the $\eta_i$.
Similar remarks hold for \eqref{eqn:form-conv-rate} in Theorem~\ref{th:estimated_isochrons_approach_true_isochrons}.
\end{Rem}

\begin{Rem}
The appearance of $\|\dT\|_{\Cont^0(K)}$ in the first and second bounds suggests that a (``wilder'') \ToF\ with large supremum norm is harder to estimate.
See also \eqref{eqn:form-conv-rate} in Theorem~\ref{th:estimated_isochrons_approach_true_isochrons}.
\end{Rem}
\begin{Rem}
Since $\dT\in \Cont^0$, there exists $M_0 > 0$ such that the assumption $\|\dT\|_{\Cont^0(K)}\leq M_0$ holds.
Since in practice the estimates $\dTe_N$ produced by our algorithm are uniformly bounded, for practical purposes it is also the case that $\|\dTe\|_{\Cont^0(K)}\leq M_0$ for some $M_0$ and all $N$.
Thus, the extra assumptions in the second portion of the theorem are quite mild.
\end{Rem}
\begin{Rem}
Since $J_N(\dT) = 0$, the estimate provided by Theorem \ref{th:cost_funcs_close} bounds how well our algorithm does at minimizing the ``unobserved'' cost function.
Taking $\epsilon = 0$ for simplicity, the final inequality of the theorem asserts that, with $f$ fixed, the performance of the estimator---as measured by $J_N$---is asymptotically within $\bo(\kappa)$ of optimality.
However, Theorem~\ref{th:cost_funcs_close} does \emph{not} say anything about how close the estimate $\dTe_N$ itself is to the true \ToF\ $\dT$.
This will be addressed in Theorem~\ref{th:estimated_isochrons_approach_true_isochrons}.
\end{Rem}
\begin{proof}[Proof of Theorem~\ref{th:cost_funcs_close}]
Expanding the summand in the definition of $\eJ$ and rearranging, we see that, for any $\beta \in \Omega^1_0(K)$: 
\begin{align}
\label{eq:Jhat_equality}
	J_N(\beta) &= \eJ(\beta) - \frac{1}{N}\sum_{i=1}^N\left\langle \beta(x_i),\eta_i \right \rangle^2  \nonumber\\
	& - \frac{2}{N}\sum_{i=1}^N \left(\left\langle \beta(x_i), f(x_i)\right\rangle - 1\right)\left \langle \beta(x_i), \eta_i \right \rangle\\
	\label{eq:Jhat_inequality}
	&\leq \eJ(\beta) + 2\Sqrt{J_N(\beta)}\Sqrt{\frac{1}{N}\sum_{i=1}^N\left\langle \beta(x_i),\eta_i \right \rangle^2} \nonumber\\
	& + \frac{1}{N}\sum_{i=1}^N\left\langle \beta(x_i),\eta_i \right \rangle^2, 
	\end{align}
	where the inequality follows from the Cauchy-Schwarz inequality.
	Since the true \ToF\ $\dT$ satisfies $(\langle\dT,f\rangle - 1)\equiv 0$, it follows immediately from \eqref{eq:Jhat_equality} that
	\begin{align}\label{eq:hat_J_true_dphi_equality}
	\eJ(\dT) &= \frac{1}{N}\sum_{i=1}^N \left \langle \dT(x_i),\eta_i \right \rangle^2.
	\end{align}
	Next,  \refEqn{assumption-phi-hat} and \eqref{eq:Jhat_inequality} imply that
	
	\begin{equation}\label{eq:J_N_ineq}
	\begin{split} 
	&J_N(\dTe_N) \leq \epsilon + \eJ(\dT) +  \frac{1}{N}\sum_{i=1}^N\left\langle \dTe_N(x_i),\eta_i \right \rangle^2 \\  
	&+ 2\Sqrt{J_N(\dTe_N)} \Sqrt{\frac{1}{N}\sum_{i=1}^N\left\langle \dTe_N(x_i),\eta_i \right \rangle^2}. 
	\end{split}
	\end{equation}
	Defining $S_N, T_N \geq 0$ by $$S_N^2\coloneqq \frac{1}{N}\sum_{i=1}^N \|\eta_i\|^2 , \quad T_N^2\coloneqq J_N(\dTe_N),$$
	substituting \eqref{eq:hat_J_true_dphi_equality} into \eqref{eq:J_N_ineq}, and using Cauchy-Schwarz yields an inequality quadratic in $T_N$:
	
	\begin{equation*}\label{eq:quadratic_ineq}
	\begin{split}
	T_N^2&-2S_N\|\dTe_N\|_{\Cont^0(K)}T_N- (\|\dT\|_{\Cont^0(K)}^2 + \|\dTe_N\|_{\Cont^0(K)}^2)S_N^2 \\
	&\leq \epsilon. 
	 \end{split}
	\end{equation*}
	This and the quadratic formula imply that 
	\begin{equation*}
	\begin{split}
	T_N&\leq 2S_N\|\dTe_N\|_{\Cont^0(K)}\\ &+ 2\Sqrt{(\|\dT\|_{\Cont^0(K)}^2 + 2\|\dTe_N\|_{\Cont^0(K)}^2)S_N^2 + \epsilon }.
	\end{split}
	\end{equation*}	
	Using subadditivity of $\sqrt{\slot}$, squaring, and rearranging yields
	$$\frac{T_N-2\sqrt{\epsilon}}{\|\dT\|_{\Cont^0(K)}+(1+\sqrt{2})\|\dTe_N\|_{\Cont^0(K)}}\leq 2S_N.$$
    This yields the first inequality of the theorem statement; taking the $\limsup$ of both sides as $N\to \infty$ and using \eqref{eqn:eta_assumptions} and \eqref{eqn:ms-limsup} completes the proof.
\end{proof}
We now begin preparations for the statement of Theorem~\ref{th:estimated_isochrons_approach_true_isochrons}.
Fix a compact subset $\mathcal{E} \subset \Omega^1_0(K)$ \footnote{Note that, by the Arzel\`a-Ascoli theorem, the compact subsets of $\Omega^1_0(K)$ are precisely the closed and bounded subsets of $1$-forms which are equicontinuous.} and define
\begin{equation*}
\F := \{\beta \in \mathcal{E}| \beta \textnormal{ is closed} \}.
\end{equation*}
It follows that $\F$ is compact since it is closed in $\Omega^1_0(K)$. 
This follows since the limit in $\Omega^1_0(K)$ of a sequence $(\beta_i) \subset \F$ must have integral zero over any nullhomotopic continuous loop in $K$, since the same is true of each $\beta_i$, so the limit is a closed $1$-form.

For Theorem~\ref{th:estimated_isochrons_approach_true_isochrons} we will assume that the strong \concept{uniform law of large numbers} applies to yield
\begin{equation}
\begin{split}\label{eqn:ulln}
\lim_{N\to\infty}\sup_{\beta\in \F}&\Bigg|J_N(\beta)
-\int_{K}\left(\langle \beta,f \rangle-\omega\right)^2\rho d\mu \Bigg|
\\&\stackrel{\textnormal{a.s.}}{=}0,
\end{split}
\end{equation}
where $\mu$ is some Borel probability measure on $K$.
Quite general conditions ensuring this can be found in \cite{andrews1987consistency,potscher1989uniform}.
Recall that the \concept{support} of a Borel measure $\mu$ is the set of all points $x$ such that every neighborhood of $x$ has positive measure.

\newcommand{\aff}{\textnormal{aff}}
We first need to prove the following lemma, in which $\aff(\slot)$ denotes the \concept{affine hull} of a subset of $\Omega^1_0(K)$.
\begin{lemma}\label{lem:coercivity}
Consider an oscillator generated by the $\Cont^1$ vector field $f\colon \Xs\to \T\Xs$.
Assume that $U\subset K$ is an open neighborhood of $\cycle$ in $\Xs$ which is forward invariant and contained in the support of $\mu$.
Also assume that $\dT \in \F$, that $\F$ is star-shaped with respect to $\dT$, and that there is $\delta > 0$ such that
\begin{equation}\label{eqn:relint}
\forall \beta \in \aff(\F)\colon \|\beta-\dT\|_{\Cont^0(U)} \leq \delta \implies \beta \in \F.
\end{equation}
Then there is $c > 0$ such that, for all $\beta\in -\dT + \F$, 
\begin{equation}\label{eqn:coercivity}
c \|\beta\|_{\Cont^0(U)}^2 \leq \int_U \langle \beta, f\rangle ^2 d\mu.
\end{equation}
\end{lemma}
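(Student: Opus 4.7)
The plan is to reduce to a minimization over a compact ``sphere'' of normalized forms and invoke uniqueness of the \ToF\ on $U$ (via Lemmas~\ref{lem:t1f-to-circle-phase} and~\ref{lem:unique}) to rule out a degenerate minimum.

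First I would set $W := \aff(\F) - \dT$, which is a linear subspace of $\Omega^1_0(K)$ because $\dT \in \F \subset \aff(\F)$. Then $-\dT + \F \subset W$, and assumption \refEqn{relint} says exactly that $\{\eta \in W : \|\eta\|_{\Cont^0(U)} \leq \delta\} \subset -\dT + \F$. In particular, the ``sphere''
\begin{equation*}
S := \{\gamma \in \F : \|\gamma - \dT\|_{\Cont^0(U)} = \delta\}
\end{equation*}
is nonempty (the case $W = \{0\}$ makes the lemma trivially true) and compact, as the intersection of the compact set $\F$ with the preimage of $\{\delta\}$ under the continuous map $\gamma \mapsto \|\gamma - \dT\|_{\Cont^0(U)}$.

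The continuous nonnegative functional $\Psi(\gamma) := \int_U \langle \gamma - \dT, f\rangle^2 d\mu$ then attains its infimum $c_0 \geq 0$ on $S$. The crucial step is to show $c_0 > 0$. Suppose instead that $c_0 = 0$, so there is $\gamma \in S$ with $\int_U \langle \gamma - \dT, f\rangle^2 d\mu = 0$. Since $\langle \gamma - \dT, f\rangle$ is continuous on $U$ and $U$ lies in the support of $\mu$, this forces $\langle \gamma, f\rangle \equiv \omega$ pointwise on $U$. As $\gamma$ is closed on $K$ (hence on the forward invariant neighborhood $U$ of $\cycle$), Lemmas~\ref{lem:t1f-to-circle-phase} and~\ref{lem:unique} applied on $U$ imply that the continuous closed $1$-form on $U$ satisfying $\langle \slot, f\rangle \equiv \omega$ is unique; thus $\gamma|_U = \dT|_U$, contradicting $\|\gamma - \dT\|_{\Cont^0(U)} = \delta > 0$.

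Finally, I would transfer this bound to arbitrary $\beta \in (-\dT + \F) \setminus \{0\}$ by the rescaling $s := \delta/\|\beta\|_{\Cont^0(U)}$. When $s \leq 1$, star-shapedness of $\F$ with respect to $\dT$ puts $\dT + s\beta$ on the segment from $\dT$ to $\dT + \beta \in \F$, so $\dT + s\beta \in \F$; when $s > 1$, $\beta \in W$ forces $s\beta \in W$ with $\|s\beta\|_{\Cont^0(U)} = \delta$, and assumption \refEqn{relint} directly yields $\dT + s\beta \in \F$. In either case $\dT + s\beta \in S$, whence $s^2 \int_U \langle \beta, f\rangle^2 d\mu = \Psi(\dT + s\beta) \geq c_0$, which rearranges to \refEqn{coercivity} with $c := c_0/\delta^2 > 0$. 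The hard part will be the strict positivity $c_0 > 0$: this is where the geometric content enters, relying both on uniqueness of the \ToF\ on a forward invariant neighborhood of $\cycle$ and on the full-support hypothesis on $\mu|_U$, which upgrades an $L^2(\mu)$ zero to a pointwise zero via continuity.
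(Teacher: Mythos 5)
Your proof is correct and follows essentially the same route as the paper's: both restrict the quadratic functional to the sphere of radius $\delta$ about $\dT$ in $\aff(\F)$ (compact by \refEqn{relint}), show the minimum there is strictly positive because a closed $1$-form on the forward invariant set $U$ pairing to $\omega$ with $f$ must coincide with $\dT|_U$, and then rescale using star-shapedness to obtain $c=c_0/\delta^2$. The only cosmetic difference is that the paper translates everything by $-\dT$ and argues positivity by showing the difference form is exact with a flow-invariant potential, whereas you invoke Lemmas~\ref{lem:unique} and~\ref{lem:t1f-to-circle-phase} directly; these are the same uniqueness argument.
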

\newcommand{\relb}{\textnormal{rel}\partial}
\newcommand{\tF}{\tilde{\F}}
\begin{Rem}\label{rem:convex}
A stronger assumption implying the star-shaped hypothesis is that $\F$ is convex, since then $\F$ is star-shaped with respect to all points in $\F$.
\end{Rem}
\begin{proof}
Define $Q\colon \Omega^1_0(K)\to [0,\infty)$ to send $\beta$ to the right side of \eqref{eqn:coercivity}, $\tF\coloneqq -\dT + \F$, and
\begin{equation}\label{eqn:B-def}
B\coloneqq \{\beta\in \aff(\tF)\colon \|\beta\|_{\Cont^0(U)} = \delta\}.
\end{equation}
Since $\F$ is compact and star-shaped with respect to $\dT$, it follows that $\tF$ is compact and star-shaped with respect to $0$.
Since $Q$ is continuous and $B$ is compact (by \eqref{eqn:relint}), $Q|_B$ attains a minimum $c_0\geq 0$. 

We claim that $c_0 > 0$.
Indeed, $Q(\beta)=0$ implies that $\langle \beta|_U, f|_U\rangle \equiv 0$, which implies that $\beta|_U = dh$ for some $\Cont^1$ function $h\colon U\to \R$ constant along the flow of $f$.
But $h$ must be constant everywhere since all trajectories converge to $\cycle$, so  $\|\beta\|_{\Cont^0(U)}=0$.
This and \eqref{eqn:B-def} imply that $\beta \not \in B$, so $c_0 > 0$.

Since $\tF$ is star-shaped with respect to $0$, each nonzero $\beta \in \tF$ is of the form $t \gamma$ for some $\gamma \in B$ and $t \coloneqq \|\beta\|_{\Cont^0(U)}/\delta$. 
Since $Q(t\gamma) = t^2 Q(\gamma) \geq t^2 c_0$, it follows that $Q(\beta) \geq \|\beta\|_{\Cont^0(U)}^2c_0/\delta^2$ for all $\beta \in \tF$.
Defining $c\coloneqq c_0/\delta^2$ completes the proof.
\end{proof}

\begin{theorem}\label{th:estimated_isochrons_approach_true_isochrons}
Consider an oscillator generated by the $\Cont^1$ vector field $f\colon \Xs\to \T\Xs$.
Assume that $U\subset K$ is an open neighborhood of $\cycle$ in $\Xs$ which is forward invariant and contained in the support of $\mu$.
Also assume that $\dTe_N, \dT \in \F$.
Then, under the assumptions of the present section, 
\begin{equation}\label{eqn:est-t1f-close-to-true}
 \lim_{\kappa + \epsilon \to 0}\limsup_{N\to\infty} \|\dTe_N - \dT\|_{\Cont^0(U)} \stackrel{\textnormal{a.s.}}{=} 0.
  \end{equation}
If also $\F$ is star-shaped with respect to $\dT$, \eqref{eqn:relint} holds for some $\delta > 0$, and $\zeta, \xi\colon \N\to [0,\infty)$ satisfy
\begin{equation}\label{eqn:conv-rates}
\begin{split}
&\frac{1}{N}\sum_{i=1}^N\|\eta_i\|^2 \leq \zeta(N)\\
&\Bigg|J_N(\dTe_N)
-\int_{K}\left(\langle \dTe_N,f \rangle-\omega\right)^2\rho d\mu \Bigg| \leq \xi(N),
\end{split}
\end{equation}
then the following explicit convergence rates hold:
\begin{equation}\label{eqn:form-conv-rate}
\begin{split}
&c\|\dTe_N - \dT\|_{\Cont^0(U)}^2 \\
&\leq 4\left[(\|\dT\|_{\Cont^0(K)}+(1+\sqrt{2})\|\dTe_N\|_{\Cont^0(K)})\zeta(N) + \sqrt{\epsilon}\right]^2\\
& + \xi(N)\\
&\leq 4 \left[(2+\sqrt{2})M_0\zeta(N) + \sqrt{\epsilon}\right]^2 + \xi(N),
\end{split}
\end{equation}
where $c>0$ is the constant in Lemma~\ref{lem:coercivity}, and the final inequality holds if $\|\dT\|_{\Cont^0(K)}, \|\dTe_N\|_{\Cont^0(K)} \leq  M_0$.
\end{theorem}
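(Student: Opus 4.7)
The plan is to prove the explicit rate \eqref{eqn:form-conv-rate} first, and then deduce the qualitative limit \eqref{eqn:est-t1f-close-to-true} from a softer compactness argument, since the assumptions required for the rate (star-shapedness of $\F$ at $\dT$, and \eqref{eqn:relint}) are not available in the first part.

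For the rate, the key step is to apply Lemma~\ref{lem:coercivity} to the residual $\beta \defeq \dTe_N - \dT$, which lies in $-\dT + \F$ since $\dTe_N, \dT \in \F$. Because $\langle \dT, f\rangle \equiv \omega$ and $U\subset K$, this yields
\begin{align*}
c\|\dTe_N - \dT\|_{\Cont^0(U)}^2
&\leq \int_U \langle \dTe_N - \dT, f\rangle^2 d\mu \\
&\leq \int_K (\langle \dTe_N, f\rangle - \omega)^2 \rho\, d\mu,
\end{align*}
where the second inequality extends the nonnegative integrand from $U$ to $K$ with the measure $\rho\, d\mu$ used in the uniform law of large numbers. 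The right-hand side is controlled by the empirical cost via \eqref{eqn:conv-rates}: it is at most $J_N(\dTe_N) + \xi(N)$. Finally, the first inequality of Theorem~\ref{th:cost_funcs_close} combined with $\tfrac{1}{N}\sum\|\eta_i\|^2 \leq \zeta(N)$ gives
$$\sqrt{J_N(\dTe_N)} \leq 2\sqrt{\epsilon} + 2\bigl(\|\dT\|_{\Cont^0(K)} + (1+\sqrt{2})\|\dTe_N\|_{\Cont^0(K)}\bigr)\sqrt{\zeta(N)}.$$
Squaring, adding $\xi(N)$, and substituting the uniform bound $M_0$ in the second line produces the two displayed inequalities of \eqref{eqn:form-conv-rate}.

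For the qualitative convergence \eqref{eqn:est-t1f-close-to-true}, coercivity is unavailable, so the approach is compactness-plus-uniqueness. Combining Theorem~\ref{th:cost_funcs_close} with the ULLN \eqref{eqn:ulln} shows that, almost surely, the population cost $\int_K (\langle \dTe_N, f\rangle - \omega)^2 \rho\, d\mu$ has $\limsup_{N\to\infty}$ tending to $0$ as $\kappa+\epsilon\to 0$. Since $\F$ is compact in $\Omega^1_0(K)$, any subsequence of $\{\dTe_N\}$ admits a $\Cont^0(K)$-convergent sub-subsequence with limit $\beta_\infty \in \F$; by continuity, $\langle \beta_\infty, f\rangle \equiv \omega$ on $\operatorname{supp}(\rho\mu) \supset U$. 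Since $\beta_\infty$ is closed (closedness is preserved under $\Cont^0$ limits by the loop-integral characterization already invoked for compactness of $\F$), the difference $\beta_\infty - \dT$ is a closed continuous $1$-form on $U$ with $\langle \beta_\infty - \dT, f\rangle \equiv 0$. Exactly the argument used within the proof of Lemma~\ref{lem:coercivity} then forces $\beta_\infty = \dT$ throughout the connected forward invariant set $U$, since the only continuous closed $1$-form annihilating $f$ on a basin of attraction is zero (by integrating along trajectories converging to $\cycle$). Since every subsequence has a sub-subsequence converging to the same limit, the full sequence converges in $\Cont^0(U)$, giving \eqref{eqn:est-t1f-close-to-true}.

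The main obstacle is bookkeeping rather than substance: one must be careful that the measure in Lemma~\ref{lem:coercivity} matches $\rho\, d\mu|_U$ used in the ULLN, and that preservation of closedness under $\Cont^0$ limits is handled via the loop-integral criterion used to establish compactness of $\F$. Once these are dispatched, the quantitative proof is a three-step telescoping: coercivity bounds $\|\dTe_N-\dT\|_{\Cont^0(U)}^2$ by a population cost, the ULLN replaces the population cost by the empirical cost plus $\xi(N)$, and Theorem~\ref{th:cost_funcs_close} bounds the empirical cost in terms of $\zeta(N)$ and $\epsilon$.
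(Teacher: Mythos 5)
Your proof follows the same overall route as the paper's: Lemma~\ref{lem:coercivity} applied to $\dTe_N-\dT$, the inclusion $U\subset K$ to pass to the population cost, \eqref{eqn:conv-rates} to swap the population cost for $J_N(\dTe_N)+\xi(N)$, and the first inequality of Theorem~\ref{th:cost_funcs_close} to bound $J_N(\dTe_N)$. The quantitative half is essentially correct as written (note that Theorem~\ref{th:cost_funcs_close} in fact delivers $\sqrt{\zeta(N)}$ where \eqref{eqn:form-conv-rate} displays $\zeta(N)$; that discrepancy is inherited from the theorem statement itself, and your $\sqrt{\zeta(N)}$ is the honest output of the computation), and your remark about reconciling the measure $\rho\,d\mu$ in \eqref{eqn:ulln} with the measure in Lemma~\ref{lem:coercivity} is a fair observation about the paper's own bookkeeping.

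The genuine gap is in your argument for \eqref{eqn:est-t1f-close-to-true}, which is a \emph{double} limit. For fixed $\kappa,\epsilon>0$, Theorem~\ref{th:cost_funcs_close} plus the ULLN only gives $\limsup_N J(\dTe_N)\leq C(\sqrt{\kappa},\sqrt{\epsilon})^2>0$, so a sub-subsequential limit $\beta_\infty$ of $\{\dTe_N\}$ satisfies only $J(\beta_\infty)\leq C$, not $\langle\beta_\infty,f\rangle\equiv\omega$; your conclusion that ``the full sequence converges to $\dT$'' is therefore established only in the degenerate case $\kappa=\epsilon=0$. What the statement requires is uniform control over the sublevel sets of $J$: namely that $\sup\{\|\beta-\dT\|_{\Cont^0(U)}\colon \beta\in\F,\ J(\beta)\leq\delta\}\to 0$ as $\delta\to 0^+$, which does follow from compactness of $\F$ together with the uniqueness fact you prove (any $\beta\in\F$ with $J(\beta)=0$ has $\beta|_U=\dT|_U$), by exactly the contradiction-subsequence argument you sketch. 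The paper packages this step as $\limsup_N \dist{\dTe_N}{\mathcal{M}}_{\Cont^0(K)}\to 0$ where $\mathcal{M}$ is the zero set of $J|_{\F}$, citing a Berge-maximum-theorem-type argument. With that one additional uniformity step inserted, your argument closes and coincides with the paper's.
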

We make several remarks before giving the proof.
\begin{Rem}
As noted in Remark~\ref{rem:convex}, the star-shaped hypothesis is automatic if $\F$ is convex.
\end{Rem}
\begin{Rem}
The assumption \eqref{eqn:ulln} is not needed for the second portion of the theorem, since its role is replaced by \eqref{eqn:conv-rates}.
\end{Rem}
\begin{Rem}
Unlike Theorem~\ref{th:cost_funcs_close}, Theorem~\ref{th:estimated_isochrons_approach_true_isochrons} gives conditions under which the estimates $\dTe_N$ converge to the true \ToF\ $\dT$ uniformly on suitable neighborhoods $U\subset K$ of $\cycle$.
This convergence implies convergence of the corresponding isochron estimates to the true isochrons $\Cont^{1}$-uniformly on such neighborhoods.
This implies, roughly speaking, that the estimated isochrons corresponding to $\dTe_N$ will resemble the true isochrons near $\cycle$ up to one order of smoothness.
\end{Rem}
\begin{Rem}
For non-forward invariant $U$ the conclusions of the theorem do not generally hold, because it is possible to construct examples of such a $U$ on which there are many closed $1$-forms $\beta$ satisfying $\langle \beta, f\rangle \equiv \omega$, and this non-uniqueness makes it possible to construct examples violating the conclusions of the theorem.
However, given some curious apparent experimental successes with non-forward invariant $U$ (cf. the guineafowl example of \refFig{cutperformancecomp}), it would be interesting to know whether something useful can be said about a subclass of open sets $U$ strictly larger than those considered in the theorem. 
\end{Rem}
\begin{Rem}
When $f\in \Cont^{k+1}$, it is possible to extend Theorem~\ref{th:estimated_isochrons_approach_true_isochrons} to obtain convergence in $\Cont^{k}$ norm rather than merely $\Cont^0$ norm.
The proof is nearly the same, but requires additional care in defining function spaces (since $K$ is an arbitrary compact subset of $\Xs$).
We defer a careful treatment to future work.
\end{Rem}
\begin{proof}[Proof of Theorem~\ref{th:estimated_isochrons_approach_true_isochrons}]
We first establish \eqref{eqn:est-t1f-close-to-true}.
By \refEqn{ulln}, with probability $1$ the function $J_N|_{\F}\colon \F \to [0,\infty)$ converges uniformly to the continuous function $J\colon \F\to [0,\infty)$ defined by $$J(\beta)\coloneqq \int_{K}\left(\langle \beta,f \rangle-\omega\right)^2 d\mu.$$
Hence $$\limsup_{N\to\infty}J(\dTe_N) \stackrel{\textnormal{a.s.}}{=} \limsup_{N\to\infty}J_N(\dTe_N),$$ so Theorem~\ref{th:cost_funcs_close} implies that $$\lim_{\kappa+\epsilon\to 0}\limsup_{N\to\infty}J(\dTe_N) \stackrel{\textnormal{a.s.}}{=} 0.$$
Defining $\mathcal{M}$ to be the set of global minimizers of $J|_{\F}$ (attaining the minimum value of $0$) and using compactness of $\F$, it follows that 
$$\lim_{\kappa+\epsilon\to 0}\limsup_{N\to\infty}\, \dist{\dTe_N}{\mathcal{M}}_{\Cont^0(K)} \stackrel{\textnormal{a.s.}}{=} 0,$$
where $\dist{\alpha}{\mathcal{M}}_{\Cont^0(K)}\coloneqq \inf_{\beta\in \mathcal{M}} \|\alpha-\beta\|_{\Cont^0(K)}$.
(Cf. the Berge maximum theorem \citep[Thm~17.31]{aliprantis2006infinite}.)

Thus, to complete the proof of \eqref{eqn:est-t1f-close-to-true} it suffices to show that the restriction to $U$ of every closed $1$-form in $\mathcal{M}$ coincides with $\dT|_{U}$. 
Since $J$ has minimum $0$ and $U$ is contained in the support of $\mu$, it follows that any $\beta\in \mathcal{M}$ must satisfy $\langle \beta, f\rangle \equiv \omega$.
Since $U$ is forward invariant, the same argument as in the final paragraph of the proof of Theorem~\ref{th:phase-reps-relate} (using Lemmas~\ref{lem:unique} and \ref{lem:t1f-to-circle-phase}) then implies that $\beta = \dT|_U$, as desired.

Next, assume that $\F$ is star-shaped with respect to $\dT$, \eqref{eqn:relint} holds for some $\delta > 0$, and  \eqref{eqn:conv-rates} holds.
Defining $\beta_N\coloneqq \dTe_N - \dT \in -\dT + \F$ for each $N$, Lemma~\ref{lem:coercivity} implies the first inequality below:
\begin{equation}\label{eqn:coercivity-2}
\begin{split}
c \|\beta_N\|_{\Cont^0(U)}^2 &\leq \int_U \langle \beta_N, f\rangle ^2 d\mu\\
&\leq \int_K \langle \beta_N, f\rangle ^2 d\mu = J(\dTe_N).
\end{split}
\end{equation}
The second inequality follows from $U\subset K$, and the equality follows from expanding the integrand defining $J$.
On the other hand, Theorem~\ref{th:cost_funcs_close} implies that
  \begin{align*}
 & \frac{1}{4}\left(\frac{\sqrt{J_N(\dTe_N)}-2\sqrt{\epsilon}}{\|\dT\|_{\Cont^0(K)}+(1+\sqrt{2})\|\dTe_N\|_{\Cont^0(K)}}\right)^2  \leq \frac{1}{N}\sum_{i=1}^N\|\eta_i\|^2.
  \end{align*}
Rearranging and using \eqref{eqn:conv-rates} and \eqref{eqn:coercivity-2} yields the first inequality in \eqref{eqn:form-conv-rate}.
The second inequality in \eqref{eqn:form-conv-rate} follows directly from this and $\|\dT\|_{\Cont^0(K)}, \|\dTe_N\|_{\Cont^0(K)} \leq  M_0$.
\end{proof}


\end{document}